%        File: main.tex
%     Created: Mon Mar 29 09:00 AM 2021 P
% Last Change: Mon Mar 29 09:00 AM 2021 P
%
%%%%%%%%%%%%%%%%%%%%%
%   AMS packages    %
%%%%%%%%%%%%%%%%%%%%%
\documentclass[12 pt]{amsart}

\usepackage{hyperref}
\usepackage{etex}
\usepackage[shortlabels]{enumitem}
\usepackage{amsmath}
\usepackage{amsxtra}
\usepackage{amscd}
\usepackage{amsthm}
\usepackage{amsfonts}
\usepackage{amssymb}
\usepackage{eucal}
\usepackage[all]{xy}
\usepackage{graphicx}
\usepackage{tikz-cd}
\usepackage{mathrsfs}
\usepackage{subfiles}
\usepackage{mathpazo}
\usepackage[colorinlistoftodos, textsize=tiny]{todonotes}
\setlength{\marginparwidth}{2cm}
\usepackage{morefloats}
\usepackage{pdfpages}
\usepackage{thm-restate}
\usepackage[utf8]{inputenc}
\usepackage{epigraph}
\usepackage{csquotes}
\usepackage[margin=1.5in]{geometry}
\usepackage{adjustbox}
\usepackage{scalerel}
\usepackage{stackengine}
\stackMath
\newcommand\reallywidehat[1]{%
\savestack{\tmpbox}{\stretchto{%
  \scaleto{%
    \scalerel*[\widthof{\ensuremath{#1}}]{\kern-.6pt\bigwedge\kern-.6pt}%
    {\rule[-\textheight/2]{1ex}{\textheight}}%WIDTH-LIMITED BIG WEDGE
  }{\textheight}% 
}{0.5ex}}%
\stackon[1pt]{#1}{\tmpbox}%
}
\parskip 1ex

\graphicspath{ {images/} }

\RequirePackage{color}
\definecolor{myred}{rgb}{0.75,0,0}
\definecolor{mygreen}{rgb}{0,0.5,0}
\definecolor{myblue}{rgb}{0,0,0.65}

\usepackage{color}

\usepackage{hyperref}
\hypersetup{citecolor=blue}
\usepackage{tikz}
\usetikzlibrary{matrix,arrows,decorations.pathmorphing}

%\numberwithin{equation}{subsection} 
%\numberwithin{figure}{subsection} 

\theoremstyle{plain}
\newtheorem{theorem}{Theorem}
\newtheorem{slogan}[theorem]{Slogan}
\newtheorem{proposition}[theorem]{Proposition}
\newtheorem{lemma}[theorem]{Lemma}
\newtheorem{corollary}[theorem]{Corollary}

\newtheorem{problem}[theorem]{Problem}
\theoremstyle{definition}
\newtheorem{definition}[theorem]{Definition}
\newtheorem{remark}[theorem]{Remark}
\newtheorem{example}[theorem]{Example}

\newtheorem{question}[theorem]{Question}
\newtheorem{conjecture}[theorem]{Conjecture}

\theoremstyle{remark}
\newtheorem{notation}[theorem]{Notation}

\numberwithin{equation}{section}
\numberwithin{theorem}{section}
\numberwithin{slogan}{section}
\numberwithin{proposition}{section}
\numberwithin{lemma}{section}
\numberwithin{corollary}{section}
\numberwithin{situation}{section}
\numberwithin{problem}{section}
\numberwithin{definition}{section}
\numberwithin{remark}{section}
\numberwithin{example}{section}
\numberwithin{exercise}{section}
\numberwithin{counterexample}{section}
\numberwithin{convention}{section}
\numberwithin{question}{section}
\numberwithin{conjecture}{section}
\numberwithin{goal}{section}
\numberwithin{warn}{section}
\numberwithin{fact}{section}
\numberwithin{notation}{section}
\numberwithin{construction}{section}

\newcommand\nc{\newcommand}
\nc\on{\operatorname}
\nc\renc{\renewcommand}

\newcommand\bp{\mathbb P}

\newcommand*{\shom}{\mathscr{H}\kern -.5pt om}
\newcommand*{\stor}{\mathscr{T}\kern -.5pt or}
\newcommand*{\sext}{\mathscr{E}\kern -.5pt xt}

\makeatletter
\providecommand\@dotsep{5}
\renewcommand{\listoftodos}[1][\@todonotes@todolistname]{%
\@starttoc{tdo}{#1}}
\makeatother

\makeatletter
\newcommand{\customlabel}[2]{\protected@write \@auxout {}{\string \newlabel {#1}{{#2}{\thepage}{#2}{#1}{}} }\hypertarget{#1}{#2}}

\renewcommand\hom{\mathrm{Hom}}

\DeclareMathOperator\rk{rk}

\DeclareMathOperator\pic{Pic}

\DeclareMathOperator\im{im}

\DeclareMathOperator\sym{Sym}

\DeclareMathOperator\Mod{Mod}
\DeclareMathOperator\HMod{HMod}

\DeclareMathOperator\PW{PW}
\DeclareMathOperator\HPW{HPW}

\DeclareFontFamily{U}{wncy}{}
\DeclareFontShape{U}{wncy}{m}{n}{<->wncyr10}{}
\DeclareSymbolFont{mcy}{U}{wncy}{m}{n}
\DeclareMathSymbol{\Sha}{\mathord}{mcy}{"58}

\setcounter{MaxMatrixCols}{20}

\def\listtodoname{List of Todos}
\def\listoftodos{\@starttoc{tdo}\listtodoname}

\title[The algebraic geometry of the Putman-Wieland conjecture]{An introduction to the algebraic geometry of the Putman-Wieland conjecture}
\author{Aaron Landesman}
%\thanks{*Corresponding author}
\author{Daniel Litt}

\AtEndDocument{\bigskip{\footnotesize%
  \textsc{Department of Mathematics, MIT, 
182 Memorial Dr,   
\mbox{Cambridge, MA 02142}} \par
Aaron Landesman: \texttt{aaronl@mit.edu} \par
    \addvspace{\medskipamount}
  \textsc{Department of Mathematics, University of Toronto, 
	  Bahen Centre,
40 St. George St.,
  \mbox{Toronto, Ontario, Canada, M5S 2E4}}  \par
  Daniel Litt: \texttt{daniel.litt@utoronto.ca} \par
  }}

\usepackage{microtype}
\begin{document}

\begin{abstract}
We give algebraic and geometric perspectives on our prior results toward the
Putman-Wieland conjecture.
This leads to interesting new constructions of families of ``origami" curves whose Jacobians have high-dimensional isotrivial isogeny factors. We also explain how a hyperelliptic analogue of the Putman-Wieland conjecture fails, following work of Markovi\'{c}.
\end{abstract}

\maketitle

\section{Introduction}

The goal of this paper is to expand on some of our recent results towards the
Putman-Wieland conjecture. Our hope is that this 
somewhat leisurely
exposition will serve as a useful entry point for geometric topologists hoping to use the 
Hodge-theoretic techniques developed in \cite{landesmanL:canonical-representations} to study mapping class groups. While many of the results of that paper are somewhat stronger than those explained here, our hope is that stripping away some of the technical aspects of the proofs therein will clarify the relevant arguments.

\subsection{Review of the Putman-Wieland conjecture}
\label{subsection:putman-wieland-statement}

We use $\Sigma_{g,n}$ to denote an orientable topological surface of genus $g$
with $n$ punctures.
Given a finite unramified $H$-cover of punctured topological surfaces $\Sigma_{g',n'} \to \Sigma_{g,n}$,
there is an action of a finite index subgroup $\Gamma$ of the mapping class group $\Mod_{g,n+1}$ on
$H_1(\Sigma_{g'}, \mathbb C)$, as we now explain.
Explicitly, $\on{Mod}_{g,n+1}$ acts on $\pi_1(\Sigma_{g,n},x)$ for some basepoint $x$, and we can take $\Gamma$ to be the stabilizer of the surjection
$\phi: \pi_1(\Sigma_{g,n},x) \twoheadrightarrow H$, where 
$\phi$ corresponds to the cover $\Sigma_{g',n'} \to \Sigma_{g,n}$.
Then, for $x' \in \Sigma_{g',n'}$ mapping to $x$, $\Gamma$ acts on
$\pi_1(\Sigma_{g',n'},x')$ and hence on $H_1(\Sigma_{g'},\mathbb C)$.

\begin{conjecture}[Putman-Wieland, \protect{\cite[Conjecture 1.2]{putmanW:abelian-quotients}}]
\label{conjecture:putman-wieland-intro}
Fix $g \geq 2, n \geq 0$.
For any unramified cover 
$\Sigma_{g',n'} \to \Sigma_{g,n}$,
the vector space $H_1(\Sigma_{g'}, \mathbb C)$ has no 
nonzero vectors with finite orbit under the action of $\Gamma$.
\end{conjecture}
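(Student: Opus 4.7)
The plan is to translate the topological question into algebraic geometry over moduli space and then use Hodge theory. The mapping class group $\on{Mod}_{g,n+1}$ is (up to finite issues) the orbifold fundamental group of $\mathcal{M}_{g,n+1}$, so the finite index subgroup $\Gamma$ cuts out a finite \'etale cover $\widetilde{\mathcal{M}} \to \mathcal{M}_{g,n+1}$. Over $\widetilde{\mathcal{M}}$ there is a universal family of $H$-covers, and its relative first homology is a local system $\mathbb{V}$ whose monodromy is precisely the given $\Gamma$-representation on $H_1(\Sigma_{g'}, \mathbb{C})$. A vector with finite $\Gamma$-orbit then gives, after passing to a further finite cover, a trivial sub-local system of $\mathbb{V}$, and the goal becomes showing no such summand exists.

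The central tool is that $\mathbb{V}$ underlies a polarized variation of Hodge structure of weight one. A trivial sub-local system corresponds to a constant sub-Hodge structure, which by semisimplicity of polarized VHSs splits off as a direct summand $\mathbb{V}_0 \subset \mathbb{V}$ whose Higgs field vanishes. Writing the Higgs bundle of $\mathbb{V}$ as $E = E^{1,0} \oplus E^{0,1}$ with Higgs field $\theta \colon E^{1,0} \to E^{0,1} \otimes \Omega^1_{\widetilde{\mathcal{M}}}$, the existence of such a summand is equivalent to the existence of a flat section sitting in $\ker \theta \cap E^{1,0}$ together with the dual vanishing on $E^{0,1}$. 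Ruling this out reduces to a computation with $\theta$, which in this setup is essentially the Kodaira-Spencer map of the family paired with the Hodge decomposition of the covering curve.

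To carry out this computation, I would restrict to one-parameter boundary strata of $\overline{\mathcal{M}}_{g,n+1}$ where the base curve acquires a node or two punctures collide. On such strata, the residue of $\theta$ admits an explicit description in terms of a vanishing cycle on $\Sigma_{g,n}$, its preimages in the $H$-cover, and the $H$-action on them. The hope is to show that as the degeneration varies, the joint kernel of all these residues on $E^{1,0}$ is trivial, yielding the desired contradiction to the existence of a $\Gamma$-invariant vector.

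The main obstacle will be the final step. Genuine isotrivial sub-VHSs do occur in nature: hyperelliptic loci, CM-type covers, and Shimura subvarieties can produce isotrivial isogeny factors in $\jac(\Sigma_{g'})$, and indeed the abstract flags that a hyperelliptic analogue of the conjecture actually fails. So the Higgs-theoretic vanishing must be sharp enough to distinguish isotrivial isogeny factors living only over a special locus from a $\Gamma$-fixed vector that must persist over all of $\mathcal{M}_{g,n+1}$. Managing this distinction, presumably by varying the boundary stratum and playing several independent degenerations off against one another, is where I expect the real difficulty to lie; it is also likely the place where hypotheses on $g$ and on the cover $H$ will have to be imposed to obtain an unconditional result.
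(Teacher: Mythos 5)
The statement you are trying to prove is not a theorem of the paper at all: it is the Putman--Wieland conjecture itself, which the paper records as open (and, as stated with $g\geq 2$, actually \emph{false}, since Markovi\'c's construction, reproduced in \autoref{proposition:hyperelliptic-counterexample}, gives counterexamples for $g=2$). The paper only proves the conjecture under an extra hypothesis --- every irreducible $H$-representation has dimension less than $g$, in particular $\#H< g^2$ (\autoref{theorem:asymptotic-putman-wieland-no-irreps}, \autoref{corollary:asymptotic-putman-wieland}) --- so no blind argument can close the statement as written, and your own write-up concedes exactly this: the ``final step'' is left open and you anticipate needing hypotheses on $g$ and on $H$. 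That concession is the genuine gap. Your setup (universal family of $H$-covers over a finite \'etale cover of moduli, the weight-one VHS, the theorem of the fixed part reducing a finite-orbit vector to a flat sub-VHS killed by the Higgs field) coincides with the framework of \autoref{section:proof-idea}, but the proposed mechanism for ruling out such a sub-VHS --- restricting to one-parameter boundary strata of $\overline{\mathcal M}_{g,n+1}$ and showing the joint kernel of the residues of the Higgs field is trivial --- is not carried out and cannot work unconditionally: for the known counterexamples (genus $\leq 1$ origami families and the hyperelliptic families in all genera) the kernel of the Higgs field is nontrivial at \emph{every} point of the base, so no amount of varying the degeneration distinguishes them; the obstruction is representation-theoretic/Hodge-theoretic (cf.\ \autoref{proposition:representation-theoretic-criterion}), not local at the boundary.

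For contrast, the paper's partial result never degenerates the curve. It stays over the interior, identifies the $\mathscr O$-linearization $\overline{\nabla}_m$ on a single fiber with the multiplication-and-trace map $\theta: H^0(Y, f_{m*}\omega_X)\to \hom(H^0(Y,f_{m*}\omega_X), H^0(Y,\omega_Y^{\otimes 2}))$, and shows that a kernel vector forces the semistable bundle $E^\rho\otimes\omega_Y$ on the base curve $Y$ to fail generic global generation; Clifford's theorem for semistable bundles then forces $\dim\rho\geq g$. That is where the hypothesis enters, and \autoref{rmk:sharpness} explains why this bound is essentially sharp. If you want to salvage your approach, you should aim it at the conditional statement $\dim\rho<g$ rather than the full conjecture, and replace the boundary-residue step with an argument on a fixed fiber of the type just described.
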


Note that the Putman-Wieland conjecture is false when $g=2$ (see \cite{markovic} or \autoref{proposition:hyperelliptic-counterexample} below). We state an equivalent version of the Putman-Wieland conjecture in
\autoref{conjecture:putman-wieland}.

\subsection{Our prior results toward the Putman-Wieland conjecture}
\label{subsection:prior-results}

One of the main results of \cite{landesmanL:canonical-representations} implies
that $H^1(\Sigma_{g',n'}, \mathbb C)$ has no nonzero vectors with finite orbit
when $g^2 > \# H$. Here is the precise statement.

\begin{theorem}[\protect{\cite[Theorem 1.4.1]{landesmanL:canonical-representations}}]
	\label{theorem:asymptotic-putman-wieland-no-irreps}
	With notation as in \autoref{subsection:putman-wieland-statement},
	for any $H$ cover $\Sigma_{g',n'} \to \Sigma_{g,n}$,
	$H^1(\Sigma_{g',n'}, \mathbb C)$ has no finite orbits under the action
	of $\Gamma$ whenever $\# H < g^2$.
\end{theorem}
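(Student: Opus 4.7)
The plan is to translate the statement into Hodge theory over the moduli of curves and apply Deligne's theorem of the fixed part, together with positivity properties of the Hodge bundle. First, I would spread the $H$-cover out in a family: on a suitable finite \'etale cover $\widetilde M \to M_{g,n+1}$ parametrizing the choice of $H$-surjection, there is a universal family of (punctured) $H$-covers $\pi \colon \mathcal{C}' \to \widetilde M$, and the $\Gamma$-action on $H^1(\Sigma_{g',n'}, \bc)$ is, up to finite index, identified with the monodromy of the local system $R^1\pi_* \bc$. A vector with finite $\Gamma$-orbit becomes flat after pulling back along a further finite cover, so it suffices to show that $R^1\pi_*\bc$ has no non-zero flat global sections.

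Next, I would invoke Deligne's theorem of the fixed part in the variation of (mixed) Hodge structures setting: the space of flat sections underlies a sub-(M)HS $\mathbb{W} \subset R^1\pi_*\bq$, which is isotrivial, with split Hodge filtration, so in particular its Higgs field vanishes on every graded piece. The local system $R^1\pi_*\bc$ decomposes $H$-equivariantly, and $\mathbb{W}$ inherits this decomposition. The $H$-invariant piece $\mathbb{W}^H$ sits inside the local system attached to the base family $\mathcal{C} \to \widetilde M$, whose monodromy factors through the standard symplectic representation $\on{Mod}_{g,n+1} \to \sp_{2g}(\bz)$; since $g \geq 2$ this admits no non-zero finite orbits, so $\mathbb{W}^H = 0$. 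The task therefore reduces to the non-trivial $H$-isotypic components of $\mathbb{W}$.

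The main step is a Hodge-theoretic positivity statement: for each non-trivial irreducible $H$-representation $V$, the $V$-isotypic piece of $R^1\pi_*\bc$ contains no non-zero isotrivial sub-VHS provided $\#H < g^2$. I would attack this by computing the rank of the $V$-isotypic part of the Hodge bundle via a Chevalley--Weil calculation, and then proving that the associated Higgs field (Kodaira--Spencer with coefficients in the $V$-isotypic piece) is sufficiently non-degenerate on $\widetilde M$ that no non-zero sub-bundle on which it vanishes can exist. The bound $\#H < g^2$ enters through the identity $\sum_V (\dim V)^2 = \#H$, which controls the relevant isotypic ranks so that the non-degeneracy argument closes.

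The hard part will be this last non-degeneracy statement for the Higgs field on each non-trivial $H$-isotypic component; carrying it out requires a direct analysis of the Hodge-theoretic geometry of the family $\mathcal{C}'/\widetilde M$ of origami-like curves, and is where the $g^2$ bound is consumed. That this numerical threshold is nearly sharp is consistent with the hyperelliptic counterexamples at $g=2$ noted above.
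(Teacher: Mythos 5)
Your opening reduction matches the paper's strategy in \autoref{section:proof-idea}: spread the cover out over a base dominant and \'etale over moduli, identify a finite orbit with a flat section after a finite cover, invoke the theorem of the fixed part, and split into $H$-isotypic pieces. But the theorem's entire content lies in the step you explicitly defer: showing that for a nontrivial irreducible $\rho$ the $\rho$-isotypic piece admits no nonzero isotrivial sub-VHS. You say you ``would attack'' this by proving the Higgs field is ``sufficiently non-degenerate'' on $\widetilde M$ and acknowledge it as ``the hard part,'' but you give no mechanism for it. As written, the proposal reduces the statement to the main difficulty rather than proving it, so there is a genuine gap.

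For comparison, here is how the paper closes that gap, and why your account of where $\#H<g^2$ enters is not quite right. The argument is carried out at a single fiber $f_m\colon X\to Y$: a flat class forces a copy of $\rho$ into the kernel of the trace pairing \eqref{align:theta-map}, which produces a nonzero map $f_{m*}\omega_X\to\omega_Y^{\otimes 2}$ inducing zero on global sections. Writing $f_{m*}\mathscr{O}_X\simeq\bigoplus_\rho (E^\rho)^{\oplus\dim\rho}$ with each $E^\rho$ semistable of slope $0$ (finite monodromy), one obtains a semistable bundle $E^\rho\otimes\omega_Y$ of slope $2g-2$ and rank $\dim\rho$ that is not generically globally generated; Clifford's theorem for semistable bundles together with Riemann--Roch then forces $\dim\rho\geq g$, contradicting $\dim\rho\leq\sqrt{\#H}<g$. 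So the bound is consumed irrep by irrep through $\dim\rho<g$, not through $\sum_V(\dim V)^2=\#H$ ``controlling the isotypic ranks'': the Chevalley--Weil rank of the isotypic piece of the Hodge bundle (roughly $(g-1)(\dim\rho)^2$) plays no direct role, and no non-degeneracy of the Higgs field over the whole of $\widetilde M$ is proved or needed --- indeed \autoref{proposition:hyperelliptic-counterexample} shows such global non-degeneracy can fail on natural subloci. Two smaller points: the stated theorem concerns the punctured surface $\Sigma_{g',n'}$, so ramified covers must be handled (the paper does this with parabolic bundles), and your claim that the $H$-invariant piece has monodromy factoring through $\on{Sp}_{2g}(\mathbb Z)$ is inaccurate for $H^1(\Sigma_{g,n})$ with $n>0$, though that part is repairable.
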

This follows from \autoref{corollary:asymptotic-putman-wieland} below, since any
irrep of a finite group $H$ has dimension at most $\sqrt{\#H}$. 

The proof of \autoref{corollary:asymptotic-putman-wieland} given in \cite{landesmanL:canonical-representations}
somewhat obscures what is going on for two reasons. First, there we opted to give an indirect proof which was shorter given our other results
in that paper. Second, the proof was complicated due to the technical necessity of dealing with marked
points. 
In \autoref{section:proof-idea}, we give a more streamlined account of the idea, which we hope will be easier to digest.
We also try to motivate the vector bundle methods described there with an
alternative approach to proving
\autoref{theorem:asymptotic-putman-wieland-no-irreps} via elementary projective geometry in \autoref{section:geometric-approach}. 

\begin{remark}
	\label{remark:}
	While \autoref{theorem:asymptotic-putman-wieland-no-irreps} shows 
$H^1(\Sigma_{g',n'}, \mathbb C)$ contains no non-zero vectors with finite orbit  when $\#H < g^2$,
the same is not true of $H_1(\Sigma_{g',n'}, \mathbb C)$, which is dual to 
$H^1(\Sigma_{g',n'}, \mathbb C)$ by the universal coefficient theorem.
The homology classes corresponding to loops around punctures have
finite orbit under the mapping class group.
This tells us that when $n' > 1$, $H^1(\Sigma_{g',n'}, \mathbb C)$ has an $n'-1$ dimensional space
of coinvariants under the action of a finite index subgroup of the mapping class group, but
it has trivial invariants under the action of this subgroup. In particular, the $\Gamma$-action on $H^1(\Sigma_{g', n'}, \mathbb{C})$ is not semisimple. We have opted to state most of our results in terms of homology rather than cohomology as the statements are a bit cleaner in this formulation. 
\end{remark}

\begin{remark}
When the hypotheses of \autoref{theorem:asymptotic-putman-wieland-no-irreps} are satisfied, it implies the corresponding case of \autoref{conjecture:putman-wieland-intro}. Indeed, the action of $\Gamma$ on $H^1(\Sigma_{g', n'}, \mathbb{C})$ has as a natural subrepresentation its action on $H^1(\Sigma_{g'}, \mathbb{C})$, which hence has no fixed vectors. This action is semisimple, hence the same is true for the dual representation on $H_1(\Sigma_{g'}, \mathbb{C})$. 
\end{remark}

\subsection{Motivation for the Putman-Wieland conjecture: Ivanov's conjecture}
\label{subsection:ivanov}
Much of the original motivation for the Putman-Wieland conjecture was as an
approach to proving Ivanov's conjecture, which is a major open question in the
study of mapping class groups.
Ivanov's conjecture states that $\on{Mod}_{g,n}$ does
not virtually surject onto $\mathbb{Z}$ for $g > 2, n \geq 0$.
See \cite[\S 7]{Ivanov:problems} and \cite[Problem
2.11.A]{Kirby:problems}. 

As a historical note, it appears that Ivanov only posed this as a question, and
not a conjecture. However, since then, many sources have referred to this
question as Ivanov's
conjecture.

If we use $\on{I}_{g,n}$ to denote the statement that Ivanov's conjecture holds
for $(g,n)$ and $\on{PW}_{g,n}$ to denote the statement that the Putman-Wieland
conjecture holds for $(g,n)$, then the relation between Ivanov's conjecture and the
Putman-Wieland conjecture is the following:
\begin{theorem}[\protect{\cite[Theorem 1.3]{putmanW:abelian-quotients}}]
\label{theorem:ivanov-and-pw}
For $g \geq 3, n \geq 0$, 
$\on{PW}_{g-1, n+1} \implies \on{I}_{g,n}$ and $\on{I}_{g,n+1} \implies
\on{PW}_{g,n}$.
\end{theorem}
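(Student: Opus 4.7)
The plan is to prove both implications by combining the Birman exact sequence
$$1 \to \pi_1(\Sigma_{g,n}) \to \Mod_{g,n+1} \to \Mod_{g,n} \to 1$$
with a five-term exact sequence argument, providing a dictionary between virtual abelian quotients of mapping class groups and finite-orbit vectors in the homology of covers. The key observation is that if $\Gamma \subseteq \Mod_{g,n+1}$ is the stabilizer of the surjection $\phi: \pi_1(\Sigma_{g,n}) \twoheadrightarrow H$ defining the cover $\Sigma_{g',n'} \to \Sigma_{g,n}$, then the Birman sequence restricts to
$$1 \to \pi_1(\Sigma_{g',n'}) \to \Gamma \to \Gamma_0 \to 1,$$
where $\Gamma_0 \subseteq \Mod_{g,n}$ has finite index, and the associated five-term exact sequence
$$H_2(\Gamma_0, \mathbb{Q}) \to H_1(\Sigma_{g',n'}, \mathbb{Q})_{\Gamma_0} \to H_1(\Gamma, \mathbb{Q}) \to H_1(\Gamma_0, \mathbb{Q}) \to 0$$
relates (co)invariants on homology of covers (middle term) to virtual abelian quotients (outer terms).

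For $\on{I}_{g,n+1} \Rightarrow \on{PW}_{g,n}$: I would suppose toward contradiction that some nonzero $v \in H_1(\Sigma_{g'}, \mathbb{C})$ has finite $\Gamma$-orbit and, after passing to a finite index subgroup, is actually invariant. Since the $\Gamma_0$-representation on $H_1(\Sigma_{g'}, \mathbb{C})$ is semisimple (being dual to the Hodge substructure $H^1(\Sigma_{g'}) \subseteq H^1(\Sigma_{g',n'})$), invariants coincide with coinvariants, so $H_1(\Sigma_{g',n'}, \mathbb{Q})_{\Gamma_0}$ has a nonzero quotient. Feeding this into the five-term sequence—after passing to a further finite index subgroup of $\Gamma_0$ to make the image of $H_2(\Gamma_0, \mathbb{Q})$ negligible compared to the coinvariants—produces a nonzero class in $H_1(\Gamma, \mathbb{Q})$, contradicting $\on{I}_{g,n+1}$.

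For $\on{PW}_{g-1,n+1} \Rightarrow \on{I}_{g,n}$: this direction requires the additional geometric input that cutting $\Sigma_{g,n}$ along a non-separating simple closed curve $\gamma$ disjoint from the punctures produces $\Sigma_{g-1,n+2}$, giving a homomorphism (from the subgroup fixing the two new punctures) $\Mod_{g-1,n+2} \to \Mod_{g,n}$ whose image is the stabilizer of $\gamma$ and whose kernel is generated by the Dehn twist $T_\gamma$. Given a virtual surjection $\Mod_{g,n} \twoheadrightarrow \mathbb{Z}$ witnessing the failure of $\on{I}_{g,n}$, one arranges—by choosing $\gamma$ appropriately—that the pullback along this homomorphism yields a virtual surjection $\Mod_{g-1,n+2} \twoheadrightarrow \mathbb{Z}$. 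Since $\Mod_{g-1,n+2}$ is exactly the group $\Mod_{g',n'+1}$ relevant to $\on{PW}_{g-1,n+1}$, one now runs the previous argument in reverse: via the five-term sequence this virtual surjection produces a nonzero $\Gamma_0$-coinvariant in $H_1(\widetilde{\Sigma}, \mathbb{Q})$ for some cover $\widetilde{\Sigma} \to \Sigma_{g-1,n+1}$, and semisimplicity plus duality converts this to a $\Gamma$-invariant vector in $H_1(\widetilde{\Sigma}, \mathbb{C})$, contradicting $\on{PW}_{g-1,n+1}$.

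The main obstacle lies in the second direction: guaranteeing that the virtual surjection $\Mod_{g,n} \twoheadrightarrow \mathbb{Z}$ restricts nontrivially to the stabilizer of some non-separating simple closed curve, since this stabilizer has infinite index in $\Mod_{g,n}$. The trick will be to exhibit enough non-separating curves whose stabilizers collectively detect any virtual character of $\Mod_{g,n}$, perhaps by exploiting the generation of $\Mod_{g,n}$ by Dehn twists about non-separating curves. A further subtlety in both directions is disentangling the "interesting" invariants in $H_1(\Sigma_{g'}, \mathbb{C})$ from the "trivial" puncture-loop classes noted in Remark 1.4, which always contribute to $H_1$ of the punctured cover but vanish on the compactification; one must ensure that the classes produced via the five-term sequence are not absorbed into this trivial subspace.
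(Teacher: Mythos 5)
A preliminary remark: the paper does not prove \autoref{theorem:ivanov-and-pw} at all — it is imported verbatim from \cite[Theorem 1.3]{putmanW:abelian-quotients}, and the only thing added here is \autoref{remark:pw-equivalent}, explaining why the punctures-only, $\mathbb{C}$-coefficient formulation is equivalent to the original one (which keeps track of boundary components). So your sketch can only be measured against the Putman--Wieland argument itself; its overall shape (Birman exact sequence plus a five-term/inflation--restriction argument in one direction, cutting along a nonseparating curve to drop the genus in the other) does reproduce that strategy in outline, but both implications have genuine gaps at exactly the points where the real work lies.

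For $\on{I}_{g,n+1} \Rightarrow \on{PW}_{g,n}$, the crux is the $H_2$-term of your five-term sequence: from a nonzero class in the coinvariants $H_1(\Sigma_{g',n'},\mathbb{Q})_{\Gamma_0}$ you cannot conclude $H_1(\Gamma,\mathbb{Q})\neq 0$ without controlling the image of $H_2(\Gamma_0,\mathbb{Q})$, and your proposed fix — pass to a deeper finite-index subgroup ``to make the image of $H_2$ negligible'' — is not a valid move. Rationally, the corestriction $H_2(\Gamma_0',\mathbb{Q})\to H_2(\Gamma_0,\mathbb{Q})$ is surjective for any finite-index $\Gamma_0'\leq\Gamma_0$ (transfer), so $H_2$ never shrinks by passing to finite-index subgroups, and bounding $H^2$ of finite-index subgroups of $\Mod_{g,n}$ is itself a well-known open problem. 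Equivalently: the $\Gamma_0$-invariant functional ``intersect with $v$'' on the Birman kernel has an extension obstruction in $H^2(\Gamma_0,\mathbb{Q})$, and circumventing that obstruction is precisely the content of this direction in \cite{putmanW:abelian-quotients} (whose careful bookkeeping with boundary components, rather than punctures, is tied to this). A smaller inaccuracy: the kernel of $\Gamma\to\Gamma_0$ is $\phi^{-1}(Z(H))$, not $\pi_1(\Sigma_{g',n'})$ on the nose, though this is repairable.

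For $\on{PW}_{g-1,n+1} \Rightarrow \on{I}_{g,n}$, the step you yourself flag as ``the main obstacle'' — that a virtual character of $\Mod_{g,n}$ restricts nontrivially to the stabilizer of some nonseparating curve, an infinite-index subgroup — is the whole difficulty, not a detail, and generation by twists about nonseparating curves does not by itself give it (a character of a finite-index subgroup need not be visible on any single curve stabilizer without an argument). Moreover, even granting such a restriction, ``running the argument in reverse'' through the five-term sequence only produces a dichotomy: either a $\Gamma_0$-invariant functional on $\pi_1$ of a cover, which may well be supported on the puncture/boundary classes — and those genuinely have finite orbit, so they contradict nothing (this is the subtlety you note at the end but do not resolve; it is why the compact-surface homology $H_1(\Sigma_{g'})$ appears in the conjecture and why the shifted indices $(g-1,n+1)$ occur) — or else a nonzero virtual character of a finite-index subgroup of the smaller mapping class group, which contradicts neither $\on{PW}_{g-1,n+1}$ nor any hypothesis you have assumed. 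As written, both implications are therefore asserted rather than proved at their decisive steps.
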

Technically, \cite[Theorem 1.3]{putmanW:abelian-quotients} states a slightly
different version of \autoref{theorem:ivanov-and-pw}.
However, the above formulation is equivalent, see \autoref{remark:pw-equivalent}.
Therefore, a proof of the Putman-Wieland conjecture would also yield a proof of
Ivanov's conjecture in almost all cases, and vice-versa.

\subsection{Motivation for the Putman-Wieland conjecture: big monodromy}

Another source of motivation for the Putman-Wieland conjecture comes from a
perspective on monodromy groups.
Indeed, a typical slogan in much of algebraic geometry and number theory is:
\begin{slogan}\label{slogan:big-monodromy}
\emph{Monodromy groups should be as big as possible.}
\end{slogan}
The Putman-Wieland conjecture \cite{putmanW:abelian-quotients} is implied by this philosophy.
Indeed, if one strongly believes \autoref{slogan:big-monodromy}, 
one might imagine that the intersection form and $H$-action on the cover $\Sigma_{g'}$ of
\autoref{conjecture:putman-wieland-intro} are the only constraints on the (virtual) action of $\on{Mod}_{g, 1}$, and so
one might guess the image of the action of a finite index subgroup of
$\on{Mod}_{g,1}$ on
$H_1(\Sigma_{g'}, \mathbb C)$ is via a finite-index subgroup of the centralizer of $H$ in $\on{Sp}_{2g'}(\mathbb{Z})$. 
The Putman-Wieland conjecture merely predicts this action has no nonzero vectors
with finite orbit. Even though the Putman-Wieland conjecture is substantially
weaker than determining the monodromy, there are no pairs $(g,n)$ with $g \geq
2$ for which it is known to hold.

\subsection{Counterexamples in low genus}

For $g\leq 1$, it turns out that the intersection form and the $H$-action are \emph{not} the only constraints on the (virtual) action of $\on{Mod}_{g,n+1}$ on $H_1(\Sigma_{g',n'}, \mathbb C)$; there are additional constraints arising from Hodge theory.  That is, \autoref{slogan:big-monodromy} is \emph{false}, if
naively interpreted. In \autoref{section:examples} we give a number of new examples where the action of $\on{Mod}_{g,n+1}$ factors through a smaller group than might be expected, for Hodge-theoretic reasons. 

For example, we give a
number of new counterexamples to the Putman-Wieland conjecture with $g
= 1, n  =1$, arising from \emph{origami curves}. 
These lead to interesting families of curves with high-dimensional
fixed parts in their Jacobians in
Examples \ref{example:quaternions}-\ref{example:4-dim-symplectic}.
Most of our counterexamples are produced via a Hodge-theoretic criterion for the failure of Putman-Wieland, 
\autoref{proposition:representation-theoretic-criterion}. These examples are the primary novelty of this paper.
In particular, we give a sequence of families of ``origami'' curves
whose Jacobians have isotrivial isogeny factors of arbitrarily large dimension in 
\autoref{example:2-dim-symplectic}, and include a number of open questions in
\autoref{subsection:questions}.
%This gives a Hodge-theoretic explanation for these failures of
%\autoref{slogan:big-monodromy}. 
In \autoref{section:counterexample-hyperelliptic}, we also discuss recent counterexamples of
Markovi\'c for $g=2$, which admit a routine generalization to the hyperelliptic setting in any genus. In \autoref{rmk:sharpness}, we indicate how this hyperelliptic generalization demonstrates the sharpness of our methods for proving \autoref{theorem:asymptotic-putman-wieland-no-irreps}.

\subsection{Overview}

In \autoref{section:putman-wieland} we recall our main results toward the Putman-Wieland
conjecture.
In \autoref{section:proof-idea}, we sketch a streamlined version of our original
proof, while in
\autoref{section:geometric-approach} we describe our original, more geometric
approach to proving our results toward the Putman-Wieland conjecture.
We discuss counterexamples to the Putman-Wieland conjecture in low genus in
\autoref{section:examples}, and pose related questions in
\autoref{subsection:questions}.
We conclude with 
\autoref{section:counterexample-hyperelliptic}, giving an
exposition of Markovi\'c's genus $2$ counterexample to the Putman-Wieland
conjecture, based heavily on a theorem of Bogomolov-Tschinkel. We also explain
a sense in which this generalizes to higher genus hyperelliptic curves.

\subsection{Notation}
Throughout, we work over the complex numbers, unless otherwise stated.
For a pointed finite-type scheme or Deligne-Mumford stack $(X, x)$ over
$\mathbb{C}$, we will use $\pi_1(X, x)$ to denote the topological fundamental group 
 of the associated complex-analytic space or analytic stack.
We use Grothendieck conventions on projective space so that points of $\bp V$
correspond to hyperplanes in $V$, as opposed to lines in $V$.

\subsection{Acknowledgements}
We would like to thank
Paul Apisa,
Marco Boggi,
Simion Filip,
Joe Harris, 
Eduard Looijenga,
Eric Larson,
Dan Margalit,
Vladimir Markovi\'{c},
Carlos Matheus,
Curtis McMullen,
Andy Putman,
Will Sawin,
Ravi Vakil,
Isabel Vogt,
Melanie Matchett Wood
and Alex Wright
for helpful discussions related to this paper.
We also thank an anonymous referee for many extremely helpful comments.

This version of the article has been accepted for publication, after peer review
but is not the Version of Record and does not reflect post-acceptance improvements, or any
corrections. The Version of Record is available online at:
\url{http://dx.doi.org/10.1007/s40879-023-00637-w}

\section{The Putman-Wieland conjecture}
\label{section:putman-wieland}

In this section we recall some notation related to the Putman-Wieland conjecture, as well as the main results toward it obtained in \cite{landesmanL:canonical-representations}. 
That paper proves some substantially more general results; we explain them here in the special case concerning the Putman-Wieland conjecture.
Although we already stated the Putman-Wieland conjecture in \autoref{conjecture:putman-wieland-intro}, it will be useful to have a slightly different version of it that also incorporates
the covering group $H$. We next introduce notation to state this version
precisely.

\begin{notation}
	\label{notation:prym}
Let $\Sigma_g$ be an oriented surface of genus $g \geq 0$, and let
$\Sigma_{g,n}$ be the complement of $n\geq 0$ disjoint points in $\Sigma_g$, so
that $\Sigma_{g,n}$ is an oriented genus $g$ surface with $n$ punctures. We let $\on{PMod}_{g,n}$ be the subgroup of the mapping class group of $\Sigma_{g,n}$ fixing the punctures.
%Recall a subgroup $G' \subset G$ is characteristic if $f(G') \subset G'$ for all
%$f \in \on{Aut}(G)$.
Fix a basepoint $v_0 \in \Sigma_{g,n}$, which we count as an additional puncture to
obtain an action of $\on{PMod}_{g,n+1}$ on $\pi_1(\Sigma_{g,n},v_0)$.
Suppose we are given a finite covering
$h: \Sigma_{g',n'} \to \Sigma_{g,n}$.
Let $\Gamma \subset \on{PMod}_{g,n+1}$ denote the finite index subgroup preserving
the covering $h$.

We obtain an action of $\Gamma$ on $H_1(\Sigma_{g',n'},\mathbb C)$.
By filling in the punctures, we also obtain
an action of $\Gamma$ on $V_h := H_1(\Sigma_{g'}, \mathbb C)$, referred to in 
\cite[p. 80-81]{putmanW:abelian-quotients} as a {\em higher Prym
representation}.

In the case $h$ is Galois with covering group $H$,
we may view
$H_1(\Sigma_{g',n'},\mathbb C)$ as an $H$-representation.
If $\rho$ is an irreducible $H$-representation we let 
\begin{align*}
	H_1(\Sigma_{g',n'},\mathbb C)^\rho := \rho \otimes \hom_H(\rho,
H_1(\Sigma_{g',n'},\mathbb C))
\end{align*}
denote the $\rho$-isotypic component.
%, i.e.~the span of all $H$-subrepresentations of $H_1(\Sigma_{g',n'},\mathbb C)$ isomorphic to $\rho$.
In this case, we obtain an action of $\Gamma$ on the characteristic subrepresentation
$H^1(\Sigma_{g',n'},\mathbb C)^\rho \subset H^1(\Sigma_{g',n'},\mathbb C)$.

\end{notation}

\begin{definition}
	\label{definition:pw}
	Fix 
	non-negative integers
	$g$ and $n$
	and a finite covering 
	$h: \Sigma_{g',n'} \to \Sigma_{g,n}$
	Let $\PW_{g,n}^H$ be the statement that for any Galois
	$H$-covering $h$
	and $v \in V_h$ any nonzero vector,
	$v$ has infinite orbit under $\Gamma$.
\end{definition}

\begin{conjecture}[Putman-Wieland, ~\protect{\cite[Conjecture
	1.2]{putmanW:abelian-quotients}}]
	\label{conjecture:putman-wieland}
	$\PW_{g,n}^H$ holds for every group $H$ with $g \geq 2, n \geq 0$.
\end{conjecture}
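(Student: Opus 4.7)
The plan is to attempt the conjecture via the Hodge-theoretic/variation-of-Hodge-structures framework of \cite{landesmanL:canonical-representations}, pushed beyond the bound $\#H<g^2$ of \autoref{theorem:asymptotic-putman-wieland-no-irreps}. Before beginning, I would flag that the conjecture as literally worded includes $g=2$, where it is false (cf.\ Markovi\'{c}'s counterexample and \autoref{section:counterexample-hyperelliptic}); consequently any honest proof proposal must restrict to $g\geq 3$ and, even there, expect the argument to use features unavailable in the hyperelliptic case, since the sharpness remarks in \autoref{rmk:sharpness} suggest that purely Hodge-positivity methods cannot break the $g^2$ barrier on a hyperelliptic component.

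The setup I would use is geometric: for a fixed Galois $H$-cover $h:\Sigma_{g',n'}\to\Sigma_{g,n}$, the family of such covers over $\mathcal{M}_{g,n+1}$ assembles $H_1(\Sigma_{g'},\mathbb C)$ into an $H$-equivariant polarized $\mathbb C$-VHS $\mathbb V_h$ on a finite étale cover of the moduli stack. A vector $v\in V_h$ with finite $\Gamma$-orbit corresponds, after passing to a further finite cover, to a nonzero flat global section of $\mathbb V_h$, equivalently a nonzero flat global section of some isotypic piece $\mathbb V_h^\rho$. So the plan reduces to: show that $\mathbb V_h^\rho$ has no nonzero flat global sections for every irreducible $H$-representation $\rho$ when $g\geq 3$.

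I would then carry out three steps. First, decompose $\mathbb V_h^\rho$ and compute its rank, $F^1$-rank, and the Hodge bundle $E^\rho := F^1\mathbb V_h^\rho$ using Chevalley--Weil; here a flat global section produces a global section $s$ of $E^\rho$ on which the Higgs field $\theta:E^\rho\to (\mathbb V_h^\rho/E^\rho)\otimes \Omega^1_{\mathcal M}$ vanishes. Second, reinterpret $\theta$ in terms of multiplication by quadratic differentials, using the Griffiths transversality description of the period map on $\mathcal M_{g,n+1}$, so that the vanishing of $\theta(s)$ becomes a base-point-freeness statement for a linear system of sections of an equivariant bundle on the cover. Third, and this is the real heart of the matter, one needs to extract enough positivity from the polarization and the genus-$3$-or-more hypothesis to force $s=0$. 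The prior work achieves this when $\dim\rho<g$; to go further I would try to exploit \emph{higher-order} information in the period map (iterates of $\theta$, i.e.\ the full Higgs complex), since the first-order vanishing alone is what forces the $g^2$ bound.

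The main obstacle I anticipate is precisely at the third step: the hyperelliptic counterexamples show that the Higgs field $\theta$ alone is not strong enough, because on the hyperelliptic locus quadratic differentials factor through $\mathrm{Sym}^2$ of a rank-$2$ bundle on $\mathbb P^1$ and one loses the needed positivity. The hope — and the place where I expect to spend most of the time — is that iterating $\theta$ and restricting to the open locus of non-hyperelliptic curves (where Max Noether's theorem makes $\mathrm{Sym}^2 H^0(\omega_C)\twoheadrightarrow H^0(\omega_C^{\otimes 2})$ surjective) lets one convert higher syzygies among canonical forms on $\Sigma_{g'}$ into a contradiction with the existence of a flat global section of $\mathbb V_h^\rho$. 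Making this intuition into an argument uniform in $H$ and $\rho$ is the essential unresolved problem; without a genuinely new input at this point, no strategy based solely on the linear-algebra of the VHS can close the gap, as the Markovi\'{c}-style examples demonstrate.
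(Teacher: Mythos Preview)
The paper does not prove this statement: \autoref{conjecture:putman-wieland} is a \emph{conjecture}, not a theorem, and the paper explicitly records that it is false at $g=2$ (see the remark immediately following the conjecture and \autoref{proposition:hyperelliptic-counterexample}). So there is no ``paper's own proof'' to compare against. What the paper does prove is the partial result \autoref{theorem:asymptotic-putman-wieland} and \autoref{corollary:asymptotic-putman-wieland}, valid only when every irreducible $H$-representation has dimension less than $g$.

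Your proposal is not a proof but an honest research outline, and you correctly diagnose its status. Steps one and two of your plan---setting up the isotypic VHS $\mathbb V_h^\rho$, translating a finite-orbit vector into a flat section, and interpreting the vanishing of the Higgs field $\theta$ via Chevalley--Weil and multiplication of differentials---are precisely the argument the paper carries out in \autoref{section:proof-idea} (and geometrically in \autoref{section:geometric-approach}) to obtain the bound $\dim\rho<g$. Your third step, invoking iterates of $\theta$ and Max Noether--type syzygy information on the non-hyperelliptic locus, is the speculative part; you yourself flag it as ``the essential unresolved problem,'' and nothing in the proposal supplies the missing positivity input. The paper's \autoref{rmk:sharpness} confirms your suspicion that first-order Higgs-field methods alone cannot cross the barrier, since the hyperelliptic counterexamples live in codimension $g-2$. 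In short: your framework matches the paper's, your assessment of the obstruction is accurate, and what remains is exactly the open content of the conjecture.
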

\begin{remark}
It is known that the Putman-Wieland conjecture is not in general true for $g=2$; see \cite[Theorem 1.3]{markovic} or \autoref{proposition:hyperelliptic-counterexample} below.
\end{remark}

\begin{remark}
\label{remark:pw-equivalent}
We note that there are several differences between the original
\cite[Conjecture 1.2]{putmanW:abelian-quotients}
and \autoref{conjecture:putman-wieland}, but the statements are equivalent, see
also \cite[Remark 6.1.4]{landesmanL:canonical-representations}.
First, 
\cite[Conjecture 1.2]{putmanW:abelian-quotients}
is stated without a group $H$ and with $\mathbb Q$ coefficients instead of
$\mathbb C$ coefficients.
Additionally, 
\cite[Conjecture 1.2]{putmanW:abelian-quotients}
keeps track of boundary components in addition to punctures, but it is enough to
treat the case of only punctures by \cite[Lemma
6.2.5]{landesmanL:canonical-representations}.
\end{remark}

We now recall the main result of \cite{landesmanL:canonical-representations}
toward the Putman-Wieland conjecture.

\begin{theorem}[\protect{\cite[Theorem 6.2.1]{landesmanL:canonical-representations}}]
	\label{theorem:asymptotic-putman-wieland}
	With notation as in \autoref{notation:prym}, let $\rho$ be an irreducible
	complex $H$-representation and let $\Gamma' \subset \Gamma$ be a finite-index subgroup. Then
	$H^1(\Sigma_{g',n'},\mathbb C)^\rho$
	has no non-zero $\Gamma'$-invariant subrepresentations 
	of dimension strictly less than $2g - 2\dim \rho$.
	The same holds for
	$H_1(\Sigma_{g'},\mathbb C)^\rho$ in place of 
	$H^1(\Sigma_{g',n'},\mathbb C)^\rho$.
\end{theorem}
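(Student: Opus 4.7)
The plan is to translate the statement into algebraic geometry, upgrade the group action to the monodromy of a variation of Hodge structure (VHS), and reduce to a Higgs-field rank estimate on the base curve. The $\Gamma$-action on $H^1(\Sigma_{g',n'},\mathbb{C})^\rho$ is the monodromy of a $\mathbb{C}$-local system $\mathbb{V}^\rho$ on a finite étale cover $\mathcal{M}$ of $\mathcal{M}_{g,n+1}$, cut out from the relative cohomology of the universal $H$-cover $f \colon \mathcal{C}' \to \mathcal{M}$. Replacing $\mathcal{M}$ by the further finite étale cover corresponding to $\Gamma'$, a nonzero $\Gamma'$-invariant subspace $W$ gives rise to a flat sub-local-system $\mathbb{W} \subset \mathbb{V}^\rho$ with trivial monodromy. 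Since $\mathbb{V}^\rho$ underlies a polarizable complex VHS of weight $1$, Deligne's theorem of the fixed part shows $\mathbb{W}$ underlies a sub-VHS; triviality of its monodromy forces the sub-VHS to have vanishing Higgs field. Thus $\mathcal{F} := F^1 \mathbb{W}$ is a subbundle of the kernel of the $\rho$-component of Kodaira--Spencer
\[
\theta^\rho \colon (f_*\omega_{\mathcal{C}'/\mathcal{M}})^\rho \longrightarrow (R^1 f_*\mathcal{O}_{\mathcal{C}'})^\rho \otimes \Omega^1_{\mathcal{M}}.
\]
By polarization and Hodge symmetry, $\rank \mathbb{W} = 2 \rank \mathcal{F}$, so the claim reduces to showing that any nonzero flat Hodge subbundle $\mathcal{F} \subset \ker \theta^\rho$ has $\rank \mathcal{F} \geq g - \dim \rho$.

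The main obstacle is this rank estimate, which I would attack by evaluating at a fiber of the family and descending to the base curve. Fix a Galois $H$-cover $\pi \colon C' \to C$ in the family, and let $\mathcal{E}_\rho$ denote the $\rho$-isotypic descent bundle on $C$. At the fiber $[C]$ one has $(f_*\omega_{\mathcal{C}'/\mathcal{M}})^\rho \cong \rho \otimes H^0(C, \mathcal{E}_\rho \otimes \omega_C)$, and $\theta^\rho$ is identified with cup-product against the Kodaira--Spencer class in $H^1(C, T_C(-D))$, where $D$ collects the marked points. The required inequality then reduces to a generic injectivity statement for a multiplication-of-sections map on $C$, which one proves by a base-point-free-pencil / Petri-style argument adapted to the equivariant bundle $\mathcal{E}_\rho$. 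It is precisely here that both the hypothesis $g \geq 2$ and the factor $\dim \rho$ genuinely appear, and this step is the technical heart of \cite{landesmanL:canonical-representations}.

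Finally, the version for $H_1(\Sigma_{g'},\mathbb{C})^\rho$ follows from the version for $H^1(\Sigma_{g',n'},\mathbb{C})^\rho$ by Poincaré duality together with semisimplicity of the $\Gamma'$-action on $H^1(\Sigma_{g'},\mathbb{C})^\rho$, which embeds $\Gamma$-equivariantly as a sub-Hodge-structure of $H^1(\Sigma_{g',n'},\mathbb{C})^\rho$.
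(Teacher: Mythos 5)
Your overall frame --- realize the $\Gamma'$-action as monodromy of a weight-one VHS over a finite \'etale cover of moduli, invoke the theorem of the fixed part, reduce to the kernel of the $\rho$-part of $\overline{\nabla}$ (the Higgs field), and descend to the bundle $E^\rho\otimes\omega_Y$ on the base curve --- is the same strategy sketched in \autoref{section:proof-idea}. But three steps, as written, are genuine gaps. First, the theorem is about arbitrary $\Gamma'$-stable subspaces of $H^1(\Sigma_{g',n'},\mathbb C)^\rho$, not subspaces on which $\Gamma'$ acts trivially; your claim that an invariant subspace yields a sub-local system ``with trivial monodromy'' is false, and for a general subrepresentation the Higgs field of the associated sub-VHS has no reason to vanish, so the inclusion $F^1\mathbb{W}\subset\ker\theta^\rho$ on which everything rests is unavailable. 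To treat general subrepresentations one needs Deligne semisimplicity plus the fact that flat summands of a polarizable VHS underlie sub-VHS, and then a bound on the rank of such a sub-VHS in terms of the kernel (not the vanishing) of its Higgs field; this is precisely what the cited Theorem 6.2.1 supplies and what your reduction skips. Second, the identity $\operatorname{rank}\mathbb{W}=2\operatorname{rank}F^1\mathbb{W}$ ``by polarization and Hodge symmetry'' is false for complex sub-VHS of the isotypic piece $\mathbb{V}^\rho$: Hodge symmetry requires a real (or self-dual) structure, and the $\rho$-isotypic pieces typically have $h^{1,0}\neq h^{0,1}$ (Chevalley--Weil); indeed this asymmetry is exactly what powers the counterexamples of \autoref{section:examples}. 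Since this is where the factor $2$ in $2g-2\dim\rho$ is supposed to come from, the quantitative statement is not reached; moreover if $F^1\mathbb{W}=0$ your argument gives no bound at all.

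Third, the key inequality is not proved but deferred: you reduce to ``a generic injectivity statement for a multiplication-of-sections map'' to be handled by ``a base-point-free-pencil / Petri-style argument'' and acknowledge it as the technical heart of the reference. The paper's own route here is concrete and different: a nonzero element of $\ker\theta$ in \eqref{align:theta-map} produces a nonzero map $E^\rho\otimes\omega_Y\to\omega_Y^{\otimes 2}$ inducing zero on global sections, so all sections of the semistable, slope-$(2g-2)$ bundle $E^\rho\otimes\omega_Y$ factor through the kernel subsheaf $U$, and then Clifford's theorem for vector bundles, $h^0(U)\leq \rk U+\tfrac{1}{2}\deg U$, combined with Riemann--Roch forces $\rk E^\rho=\dim\rho\geq g$ (and, in the sharper form of the cited Proposition 6.3.1, the $2g-2\dim\rho$ bound). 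Without carrying out this (or an equivalent) estimate, and without repairing the two points above, the proposal does not establish the stated theorem; the final deduction for $H_1(\Sigma_{g'},\mathbb C)^\rho$ via Poincar\'e duality and semisimplicity is fine.
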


Because the span of a fixed vector is a $1$-dimensional subrepresentation, and $1
< 2g -2 \dim \rho$ if and only if $\dim \rho < g$, 
we get the following important corollary:

\begin{corollary}[\protect{\cite[Corollary 6.2.3]{landesmanL:canonical-representations}}]
	\label{corollary:asymptotic-putman-wieland}
	$\PW_{g,n}^H$ holds for every group $H$ such that every irreducible
	representation $\rho$ of $H$ has dimension $\dim \rho < g$.
\end{corollary}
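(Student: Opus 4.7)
The plan is to derive the corollary from \autoref{theorem:asymptotic-putman-wieland} by a formal dimension-counting argument. Unwinding \autoref{definition:pw}, the statement $\PW_{g,n}^H$ asks that no nonzero vector $v \in V_h = H_1(\Sigma_{g'}, \mathbb C)$ has finite $\Gamma$-orbit; equivalently, that $V_h^{\Gamma'} = 0$ for every finite-index subgroup $\Gamma' \subset \Gamma$. So I would suppose for contradiction that some finite-index $\Gamma' \subset \Gamma$ fixes a nonzero $v \in V_h$. Then the line $\langle v \rangle \subset V_h$ is a $1$-dimensional $\Gamma'$-invariant subrepresentation.

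To bring \autoref{theorem:asymptotic-putman-wieland} to bear, I would first refine $v$ to a single isotypic component. Since by construction $\Gamma$ preserves the covering $h$, its action on $V_h$ commutes with the deck-transformation action of $H$; in particular, $\Gamma$ (hence $\Gamma'$) respects the isotypic decomposition $V_h = \bigoplus_\rho V_h^\rho$. Writing $v = \sum_\rho v_\rho$ with $v_\rho \in V_h^\rho$, each summand is $\Gamma'$-invariant, and for any $\rho$ with $v_\rho \neq 0$ the span $\langle v_\rho \rangle \subset V_h^\rho$ is a nonzero $\Gamma'$-invariant subrepresentation of dimension $1$.

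Now I would apply \autoref{theorem:asymptotic-putman-wieland} in its $H_1$ form to $V_h^\rho = H_1(\Sigma_{g'}, \mathbb C)^\rho$. The existence of a nonzero $\Gamma'$-invariant subrepresentation of dimension $1$ forces $1 \geq 2g - 2\dim\rho$, and since dimensions are integers this yields $\dim\rho \geq g$. This contradicts the hypothesis that every irreducible $H$-representation has $\dim\rho < g$, completing the proof.

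There is no substantive obstacle here: the entire argument is a short reduction once the theorem is in hand. The only point that deserves a moment's reflection is the compatibility of the $\Gamma$- and $H$-actions, which is what legitimizes projecting a $\Gamma'$-invariant vector onto an isotypic component and retaining $\Gamma'$-invariance.
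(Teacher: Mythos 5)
Your argument is correct and is essentially the paper's own: the corollary is deduced from \autoref{theorem:asymptotic-putman-wieland} by observing that a vector with finite $\Gamma$-orbit spans a $1$-dimensional invariant subrepresentation of some isotypic piece, which is impossible once $1 < 2g - 2\dim\rho$, i.e.\ once $\dim\rho < g$. The only difference is that you make explicit the projection to an isotypic component (using that the $\Gamma$-action commutes with the $H$-action), a step the paper leaves implicit.
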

In particular, the hypotheses of \autoref{corollary:asymptotic-putman-wieland}
are satisfied if $\#H<g^2$, implying
\autoref{theorem:asymptotic-putman-wieland-no-irreps}.

\section{The algebraic proof of
\autoref{corollary:asymptotic-putman-wieland}}
\label{section:proof-idea}

In this section, we outline the idea of the proof of \autoref{corollary:asymptotic-putman-wieland}, stripping away some of the additional technical complications from 
\cite{landesmanL:canonical-representations}.

We restrict to the unramified case, as the ramified case is analogous,
once one replaces usual vector bundles with parabolic vector bundles.

To start, given $\phi:\pi_1(\Sigma_g)\twoheadrightarrow H$, we set up a parameter space of $H$-covers. We begin by finding a diagram
\begin{align}\label{families of curves}\xymatrix{
\mathscr{X} \ar[r]^f \ar[rd]_{\pi'} & \mathscr{C} \ar[d]^\pi \\
& \mathscr{M}
}\end{align}
where $\pi$ is a relative curve of genus $g$, $\pi'$ is a relative curve of
genus $g'$, and $f$ is an $H$-cover, such that the map $\mathscr{M}\to
\mathscr{M}_g$ induced by $\pi$ is dominant \'etale, and such that for $m\in
\mathscr{M}$, the fiber $f_m: X\to Y$ is a finite \'etale connected Galois
$H$-cover corresponding to $\phi$.

	If $\PW^H_{g,0}$ were false, there would be a nonzero vector $v
	\in H_1(X, \mathbb Q)$ fixed by a finite index-subgroup of
	$\on{Mod}_{g,0}=\pi_1(\mathscr{M}_g)$. After replacing $\mathscr{M}$ with a finite \'etale cover, we may thus assume $v$ is in fact fixed by $\pi_1(\mathscr{M})$.
	This implies that the natural map $\nabla: H^0(R^1 \pi'_* \mathbb C \otimes \mathscr
	O_{\mathscr M}) \to H^0(R^1 \pi'_* \mathbb C \otimes \Omega_{\mathscr M})$
	induced by the Gauss-Manin connection has non-zero kernel. Setting $\mathscr{E}=R^1\pi'_*\mathbb{C}\otimes \mathscr{O}_{\mathscr{M}}$,
	Hodge theory gives a filtration $$\pi'_* \Omega_{\mathscr X/\mathscr
	M} =: F^1 \mathscr E \subset \mathscr E := R^1 \pi_* \mathbb C \otimes \mathscr O_{\mathscr M}$$
	with quotient $\mathscr E/F^1 \mathscr E \simeq R^1 \pi'_* \mathscr
	O_{\mathscr X}$.
	The theorem of the fixed part tells us that the kernel of the
	Gauss-Manin connection  in fact arises from a sub-Hodge structure of $R^1\pi'_*\mathbb{C}$, and hence meets
	$F^1 \mathscr E$ nontrivially. This type of argument is spelled out in
	more detail in \autoref{lemma:trivial-equivalence}.
	The connection $\nabla$, which is only a $\mathbb C$-linear map,
	induces a $\mathscr O_{\mathscr M}$-linear map $$\overline{\nabla}: \pi'_* \Omega_{\mathscr
	X/\mathscr M} \to R^1 \pi'_* \mathscr O_{\mathscr X} \otimes
	\Omega_{\mathscr M},$$
	which again has a nontrivial kernel, using the theorem of the fixed
	part.

	We now relate the kernel of $\overline{\nabla}$ to the vanishing
	of a particular map on cohomology.
	As above, fix a point $m \in \mathscr M$, whose fiber yields a finite \'etale cover $f_m: X \to Y$.
	The map $\overline{\nabla}_m$ may be identified as a map 
	$$\overline{\nabla}_m: H^0(X, \omega_{X}) \to H^1(X, \mathscr O_{X}) \otimes H^0(Y,
		\omega_Y^{\otimes 2}).$$
		Using Serre duality and the identification $H^0(X, \omega_{X})
		\simeq H^0(Y, f_{m*} \omega_{X})$, this can be identified with
	the map
\begin{align}\label{align:theta-map}
	\theta: H^0(Y, f_{m*} \omega_{X}) &\to \hom(H^0(Y, f_{m*} \omega_{X}),
	H^0(Y,\omega_Y^{\otimes 2})) \\
	s &\mapsto (t \mapsto \on{tr}(s \otimes t)).\nonumber
\end{align}
%	Equivalently, this can be understood as adjoint to the map induced on
%	$H^0$ by $f_* \omega_{X} \otimes f_* \omega_{X} \to f_*
%	\omega_{X}^{\otimes 2} \xrightarrow{\on{tr}} \omega_{X}^{\otimes
%	2}(D)$.
	By assumption $\theta$ has a non-zero kernel. Any non-zero element of the kernel  yields a nonzero map
	$f_{m*} \omega_{X} \to \omega_Y^{\otimes 2}$, inducing the zero map on
	global sections.

	Finally, we use the vanishing of the above map on cohomology to produce
	a stable vector bundle of low rank and high slope on $Y$ which is not generically
	globally generated, furnishing a contradiction. Given a representation
	$\rho: \pi_1(Y)\to \on{GL}_r(\mathbb{C})$, we let $\underline{\rho}$ be the associated local system on $Y$ and set $E^\rho=\underline\rho\otimes \mathscr{O}_Y$. Note that if $\rho$ has finite (or, indeed, unitary) image, then $E^\rho$ is semistable of slope zero.
	We can identify $f_{m*} \mathscr O_{X} \simeq \oplus_{\text{$H$-irreps
	$\rho$}} (E^\rho)^{\oplus \dim \rho}$, by viewing $f_{m*} \mathbb C$ as a local
	system on $Y$ induced by the the regular representation of $H$, and
	tensoring up with $\mathscr O_{Y}$.
%	Using relative Grothendieck duality, we also obtain a decomposition
%	$f_* \omega_{X/Y} \simeq (f_* \mathscr O_{X})^\vee \simeq \oplus_{\text{$H$-irreps
%	$\rho$}} (E^\rho)^{\oplus \dim \rho}$.
Hence by the projection formula,
$$f_{m*}\omega_{X}=f_{m*}(f_m^*\omega_Y)=\omega_Y\otimes f_{m*}\mathscr{O}_{X}= \omega_Y\otimes \left(\bigoplus_{\text{$H$-irreps
	$\rho$}} (E^\rho)^{\oplus \dim \rho}\right).$$
	As we have produced a nonzero map
	$f_{m*} \omega_{X} \to \omega_Y^{\otimes 2}$, inducing the zero map on
	global sections, there must be some $\rho$ and a nonzero
	map $E^\rho \otimes \omega_Y \to \omega_Y^{\otimes 2}$ 
	which induces the $0$ map on global sections.
	However, $E^\rho \otimes \omega_Y$ is then a semistable vector bundle of slope
	$2g-2$, all of whose global sections factor through the subbundle $U := \ker (E^\rho \otimes
	\omega_Y \to \omega_Y^{\otimes 2})$.
	This will force $U$ to have a very high-dimensional space of global
	sections relative to its rank. 

	We now explain why the condition that $H^0(Y, U) = H^0(Y, E^\rho \otimes
	\omega_Y)$ leads to a contradiction.
	This is in tension with a version of Clifford's theorem for vector
	bundles, which says that if $U$ is a vector bundle which has a
	filtration by semistable bundles of slopes $\geq 0$ and $\leq 2g$, 
	then $h^0(Y, U) \leq \frac{\deg U}{2} + \rk
	U$
	\cite[Lemma 6.2.1]{landesmanL:geometric-local-systems}.
	This idea is used to prove (without too much difficulty) the more precise
	\cite[Proposition 6.3.1]{landesmanL:geometric-local-systems}
	which tells us that such a $U$ can only exist when
	$\dim \rho = \rk E^\rho \geq g$.
	We now spell this proof out in the case $\rk U = \rk E^\rho\otimes \omega_Y
	-1$.
	Note first that $\mu(U) \leq \mu(E^\rho\otimes \omega_Y
	)$ by semistability of $E^\rho\otimes \omega_Y$ and so $\mu(U) \leq
	(2g-2) \rk U = (2g-2)(\rk E^\rho\otimes \omega_Y - 1)$.
	Riemann-Roch, Clifford's theorem, and the above observation together yield 	
	\begin{align*}
		\deg E^\rho \otimes \omega_Y + (1-g) \rk E^\rho \otimes \omega_Y
		&\leq h^0(Y, E^\rho \otimes \omega_Y)
		= h^0(U) \\
		& \leq \rk U + \deg
		U/2 \\
		&\leq (\rk E_\rho \otimes \omega_Y - 1) + (g-1)(\rk E_\rho \otimes
		\omega_Y - 1).
	\end{align*}
	Solving for $\rk E_\rho \otimes \omega_Y$ gives $\dim \rho =\rk E_\rho \otimes \omega_Y \geq g$.

	This verifies $\PW_{g,0}^H$, since we assumed $H$ was a group all of
	whose representations had rank less than $g$.
	\begin{remark}
		In fact the methods here can be used to prove a stronger statement, about families of curves that do not necessarily dominate $\mathscr{M}_g$. Indeed, let 
		$$\xymatrix{
\mathscr{X} \ar[r]^f \ar[rd]_{\pi'} & \mathscr{C} \ar[d]^\pi \\
& \mathscr{M}
}$$
be as in \eqref{families of curves}, except we now assume that the map
$\mathscr{M}\to \mathscr{M}_g$ has image of codimension $\delta$; we assume
$\mathscr{M}\to \mathscr{M}_g$ is \'etale onto its image. Arguing as above, we
find that $H^0(E^\rho\otimes \omega_Y)\to H^0(\omega_Y^{\otimes 2})$ has rank
$\delta$, and so replacing the use of \cite[Proposition
6.3.1]{landesmanL:geometric-local-systems} with \cite[Proposition
6.3.6]{landesmanL:geometric-local-systems}, we obtain:
\begin{theorem}	\label{thm:high-codim-version}
With notation as above, suppose $\dim\rho<g-\delta$. Then the action of
$\pi_1(\mathscr{M}, m)$ on $H^1(\mathscr{X}_m, \mathbb C)^\rho$ has no non-zero finite orbits.
\end{theorem}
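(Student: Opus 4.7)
My plan is to adapt the argument just sketched for \autoref{corollary:asymptotic-putman-wieland}, keeping careful track of the codimension $\delta$. I would argue by contradiction: suppose that some nonzero vector in $H^1(\mathscr{X}_m, \mathbb{C})^\rho$ has finite $\pi_1(\mathscr{M},m)$-orbit. After replacing $\mathscr{M}$ by a finite \'etale cover (which preserves the hypothesis that $\mathscr{M}\to\mathscr{M}_g$ is \'etale onto its image, and does not change $\delta$), this vector becomes $\pi_1(\mathscr{M},m)$-invariant. The theorem of the fixed part then produces a nontrivial sub-Hodge structure of $R^1\pi'_*\mathbb{C}$ inside the $\rho$-isotypic piece, which must meet $F^1$; equivalently, the $\mathscr{O}_\mathscr{M}$-linear graded Gauss-Manin map
$$\overline{\nabla}: \pi'_*\Omega_{\mathscr{X}/\mathscr{M}} \to R^1\pi'_*\mathscr{O}_{\mathscr{X}}\otimes \Omega_\mathscr{M}$$
has a nonzero kernel on the $\rho$-isotypic summand.

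Next I would analyze the fiber at $m$. The key observation is that the cotangent space $\Omega_\mathscr{M}|_m$ is the quotient of $T_m^*\mathscr{M}_g \simeq H^0(Y,\omega_Y^{\otimes 2})$ by the $\delta$-dimensional annihilator $W$ of the image of Kodaira-Spencer $T_m\mathscr{M}\hookrightarrow T_m\mathscr{M}_g$. Thus, via Serre duality and the projection formula as in the dominant case, a nonzero kernel element of $\overline{\nabla}_m$ yields a nonzero map of sheaves
$$\psi: f_{m*}\omega_X \to \omega_Y^{\otimes 2},$$
and the new feature compared to the dominant case is that $H^0(\psi)$ need not vanish; rather, its image lies in $W$, so it has rank at most $\delta$.

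I would then use the decomposition $f_{m*}\omega_X = \omega_Y \otimes \bigoplus_{\rho'}(E^{\rho'})^{\oplus \dim\rho'}$ and restrict $\psi$ to the summand indexed by the given $\rho$, obtaining a nonzero map $\psi_\rho: E^\rho \otimes \omega_Y \to \omega_Y^{\otimes 2}$ whose induced map on global sections has rank at most $\delta$. Setting $U := \ker \psi_\rho$, this forces
$$h^0(Y,U) \geq h^0(Y, E^\rho \otimes \omega_Y) - \delta.$$
Combining this with Riemann-Roch on $E^\rho \otimes \omega_Y$ (which is semistable of slope $2g-2$) and the refined vector-bundle Clifford inequality \cite[Proposition 6.3.6]{landesmanL:geometric-local-systems} applied to $U$, one solves for $\rk E^\rho$ and deduces $\dim\rho = \rk E^\rho \geq g-\delta$, contradicting the hypothesis.

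The hard part is the last numerical step. In the dominant case, \cite[Proposition 6.3.1]{landesmanL:geometric-local-systems} suffices because the global-sections map vanishes identically; now one needs the more delicate \cite[Proposition 6.3.6]{landesmanL:geometric-local-systems}, which is sensitive enough to accommodate a $\delta$-dimensional loss in the global-sections condition without degrading the bound beyond $\dim\rho \geq g-\delta$. The remaining steps (the theorem of the fixed part, the Hodge-theoretic repackaging, and the projection-formula decomposition) are essentially identical to the codimension-zero case.
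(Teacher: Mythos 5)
Your proposal is correct and follows essentially the same route as the paper: the paper's proof likewise argues as in the dominant case, observes that the non-dominance forces the global-sections map $H^0(E^\rho\otimes\omega_Y)\to H^0(\omega_Y^{\otimes 2})$ to have rank at most $\delta$ (its image lying in the $\delta$-dimensional conormal space, as you say), and then substitutes \cite[Proposition 6.3.6]{landesmanL:geometric-local-systems} for \cite[Proposition 6.3.1]{landesmanL:geometric-local-systems} to get $\dim\rho\geq g-\delta$. You have simply spelled out the intermediate Hodge-theoretic and Serre-duality steps that the paper compresses into ``arguing as above.''
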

Analogously to \autoref{theorem:asymptotic-putman-wieland-no-irreps}, one
immediately obtains that for $|H|<(g-\delta)^2$, the action of
$\pi_1(\mathscr{M}, m)$ on $H^1(\mathscr{X}_m, \mathbb C)$ has no non-zero vectors with finite orbit. Replacing usual vector bundles with parabolic vector bundles, one may prove an analogue of \autoref{thm:high-codim-version} for ramified covers.
	\end{remark}

\section{A geometric approach to \autoref{theorem:asymptotic-putman-wieland}}
\label{section:geometric-approach}

While we were thinking about the Putman-Wieland conjecture, we first came up
with a rather different argument for proving
\autoref{theorem:asymptotic-putman-wieland}, which involves analyzing the
geometry of the canonical map, and in particular the quadrics which contain the
covering curve under its canonical map.
We discovered that a certain degeneracy locus naturally
appeared in the intersection of these quadrics, which yields a different method to
produce the same non-generically globally
generated vector bundles that are described in \autoref{section:proof-idea}.
It was only after we found this geometric
approach that we were able to somewhat simplify the presentation via the
algebraic argument described in \autoref{section:proof-idea}.
In this section, we aim to present our original, geometric approach.
It is our hope that sharing this more geometric approach may shed some additional light
on the problem, and may also be of use in related problems.
Since we are only trying to convey the main idea, as in
\autoref{section:proof-idea},
we will assume $n = 0$, i.e., the relevant cover $X \to Y$ is \'etale.

The first steps in
this geometric approach are the same as those described in \autoref{section:proof-idea}.
Namely, by studying the Gauss-Manin connection, we find that given a counterexample to Putman-Wieland, the natural
map
$H^0(X, \omega_{X}) \xrightarrow{\theta} \hom(H^0(X, \omega_{X}), H^0(Y,
\omega_Y^{\otimes
2}))$
has non-zero kernel.
If $X \to Y$ is an $H$-cover, we can decompose the above as
$H$-representations, and there must be some irreducible $H$-representation
$\rho$ and a subrepresentation $\rho \subset H^0(X, \omega_{X})$ in the kernel
of $\theta$.
Let us now try to understand what this copy of $\rho$ buys us in terms of the
canonical map $X \to \mathbb P H^0(X, \omega_X)$.
Recall here we are using Grothendieck conventions, so points of $\mathbb P
H^0(X, \omega_X)$ correspond to hyperplanes in $H^0(X, \omega_X)$.
By inspection of \eqref{align:theta-map}, the map $$H^0(X, \omega_X)\otimes H^0(X, \omega_X)\to H^0(Y, \omega_Y^{\otimes 2})$$ adjoint to $\theta$ factors through $\on{Sym}^2 H^0(X, \omega_X)$.
Hence, restricting to $\rho\otimes H^0(X, \omega_X)$ we obtain that the composition 
\begin{equation}
\rho \otimes H^0(X, \omega_X) \xrightarrow{\alpha_1} \sym^2 H^0(X, \omega_X) \to H^0(Y,
\omega_Y^{\otimes 2}),
\label{equation:symmetric-map-repeated}
\end{equation}
vanishes.
Observe that $H^0(Y, \omega_Y^{\otimes 2})$ carries the trivial
$H$-representation, and hence $\rho$ can only pair nontrivially under the above composition with 
$H^0(X, \omega_X)^{\rho^\vee}$, the $\rho^\vee$ isotypic part of $H^0(X,
\omega_X)$.
Hence, the vanishing of \eqref{equation:symmetric-map-repeated} is equivalent to
the vanishing of
\begin{equation}
	\label{equation:symmetric-map-isotypic}
\begin{tikzpicture}[baseline= (a).base]
\node[scale=.7] (a) at (0,0){
				\begin{tikzcd}
		\rho \otimes H^0(X, \omega_X)^{\rho^\vee}  \ar
		{r}{\alpha_1} \ar[rr,bend left=10, ,"\alpha"]&  \sym^2 H^0(X,
	\omega_X)  \ar {r}{\alpha_2} & (\sym^2 H^0(X, \omega_X))^H\to H^0(X, \omega_X^{\otimes 2})^H =H^0(Y,
	\omega_Y^{\otimes 2}),
				 \end{tikzcd}   
};
\end{tikzpicture}
\end{equation}
where $\alpha_1$ is the multiplication map, $\alpha_2$ is the averaging map $$q
\mapsto \frac{1}{\# H}\sum_{h \in H} h^* q,$$ and
$\alpha := \alpha_2 \circ \alpha_1$.
Indeed, viewing $\sym^2 H^0(X, \omega_X)$ as the space of quadrics on $\mathbb{P}H^0(X, \omega_X)$, any element of the image of $\alpha$ yields a quadric vanishing on the image of $X$ under the canonical map.

Let us derive some consequences from this observation. To get a feeling for what is going on, we work out examples depending
on the rank of $\rho$, so as to motivate the general case.
Suppose $Y$ has genus $g$.
By the Chevalley-Weil formula,
(see the original source \cite{chevalleyW:uber} or the more expository \cite[Theorem 2.1]{naeff:chevalley},)
$H^0(X, \omega_X)^{\rho^\vee}$ is a direct sum of $(g - 1) \cdot \dim \rho^\vee$
many copies of $\rho^\vee$, and so has dimension $(g - 1) \cdot (\dim
\rho)^2$.

\subsection{Summary of the remainder of this section}
\label{subsection:summary}
We first see how to use the above observation to rule out $1$-dimensional representations in
\autoref{subsection:dim-1}.
We use $r := \dim \rho$.
Following this, we will see how to use the above to produce a vector bundle
$E^\rho \otimes \omega_Y$ which is not generically globally generated in the
cases $(r =2, g=2)$ in \autoref{subsection:dim-2}, $(r = 3, g =2)$ in
\autoref{subsection:dim-3}, and then the general case in
\autoref{subsection:general-case}.
We stress that in these examples, we are not trying to prove or disprove
the Putman-Wieland conjecture, but merely to indicate a geometric reason for why $E^\rho \otimes
\omega_Y$ is not generically globally generated.
One can then argue as in \autoref{section:proof-idea} that any vector bundle
which is not generically globally generated must have rank $E^\rho \otimes
\omega_Y \geq g$ using
Clifford's Theorem, and so
this would lead to a proof of the Putman-Wieland conjecture in the case
$\dim \rho < g$.
Again, there is a valid alternate argument given in
\autoref{section:proof-idea} of why $E^\rho \otimes \omega_Y$ fails to be
generically globally generated. The point of this section is to give an
alternate, more geometric explanation of that fact.

\subsection{The dimension of $\rho$ is $1$}
\label{subsection:dim-1}

We first indicate how to rule out the case that $\rho$ has rank $1$.
The main idea is that the vanishing of \eqref{equation:symmetric-map-isotypic}
yields a quadric of rank $2$ containing the image of $X$ under the canonical
map. Since rank $2$ quadrics vanish on a union of two hyperplanes, this would
force the image of $X$ under the canonical map
to be contained in a hyperplane, contradicting nondegeneracy of the canonical
map.
This conclusion is established in \autoref{remark:no-1-dim}
%A related, but stronger statement in the case that $H$ is abelian is originally due to Looijenga
%\cite{looijenga:prym-representations}.

\begin{lemma}
	\label{lemma:rank-2d-quadric}
	Let $\alpha$ be as in \eqref{equation:symmetric-map-isotypic}.
	Any quadric in $\on{im}(\alpha) \subset (\on{Sym}^2 H^0(X, \omega_X))^H$ has rank at most $2\dim\rho$.
\end{lemma}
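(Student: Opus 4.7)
The plan is to exploit the $H$-equivariance of $\alpha_1$ to reduce the rank bound to a question about the $H$-invariant subspace of $\rho \otimes H^0(X, \omega_X)^{\rho^\vee}$, where Schur's lemma forces any element into a low-tensor-rank form that makes the rank bound transparent.

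First, I would note that $\alpha_1$ is $H$-equivariant (being the restriction of multiplication on $H^0(X,\omega_X)^{\otimes 2}$), while $\alpha_2$ is the projection onto $H$-invariants. Hence $\alpha = \alpha_2 \circ \alpha_1 = \alpha_1 \circ P$, where $P$ is the averaging projection onto $(\rho \otimes H^0(X, \omega_X)^{\rho^\vee})^H$. It therefore suffices to bound the rank of $\alpha_1(\tilde v)$ for an arbitrary $H$-invariant $\tilde v$.

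Next, I would unpack the invariants. Writing the isotypic component as $H^0(X,\omega_X)^{\rho^\vee} \cong \rho^\vee \otimes U$ with $U := \hom_H(\rho^\vee, H^0(X, \omega_X))$ a trivial $H$-module, Schur's lemma gives
\begin{equation*}
(\rho \otimes H^0(X, \omega_X)^{\rho^\vee})^H \;\cong\; (\rho \otimes \rho^\vee)^H \otimes U \;\cong\; U.
\end{equation*}
Let $r := \dim \rho$, fix a basis $\{e_i\}$ of $\rho$ with dual basis $\{e_i^*\}$, and recall that the canonical generator of $(\rho \otimes \rho^\vee)^H$ is $\id_\rho = \sum_i e_i \otimes e_i^*$. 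Unwinding the identification, an invariant $\tilde v$ corresponding to $u \in U$ is the image of $\id_\rho \otimes u$, namely $\sum_i e_i \otimes (e_i^* \otimes u)$. Setting $g_i := e_i^* \otimes u \in H^0(X,\omega_X)^{\rho^\vee}$, every $H$-invariant can be written with only $r$ terms as $\tilde v = \sum_{i=1}^{r} e_i \otimes g_i$.

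Applying $\alpha_1$, we obtain $\alpha(v) = \alpha_1(\tilde v) = \sum_{i=1}^r e_i \cdot g_i \in \sym^2 W$, where $W := \spn\{e_1,\ldots,e_r,g_1,\ldots,g_r\} \subset H^0(X, \omega_X)$ has dimension at most $2r = 2\dim\rho$; since any quadric in $\sym^2 W$ has rank at most $\dim W$, this is the desired bound. The only real step is the commutation identity $\alpha_2 \circ \alpha_1 = \alpha_1 \circ P$; after that Schur's lemma produces the low-tensor-rank form of $\tilde v$ essentially for free, so I do not anticipate a substantial obstacle.
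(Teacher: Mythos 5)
Your proof is correct and takes essentially the same approach as the paper: both exploit $H$-equivariance and Schur's lemma to confine any quadric in $\on{im}(\alpha)$ to $\sym^2$ of the span of $\rho$ and a single copy of $\rho^\vee$ inside $H^0(X,\omega_X)$, whence rank at most $2\dim\rho$. Your identity $\alpha=\alpha_1\circ P$ and the explicit description of invariants as $\id_\rho\otimes u$ is simply a more hands-on version of the paper's factorization through $\rho\otimes\rho^\vee$ and $\sym^2(\rho\oplus\rho^\vee)$.
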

\begin{proof}
Because the multiplication map is $H$-equivariant, any given map $\rho \otimes
H^0(X, \omega_X) \to H^0(Y, \omega_Y^{\otimes 2})$, where the target has the
trivial $H$-action, must factor through a map 
$$\rho\otimes H^0(X, \omega_X)^{\rho^\vee}\to \rho \otimes \rho^\vee \to H^0(Y, \omega_Y^{\otimes 2})$$
for $  H^0(X, \omega_X)^{\rho^\vee}\to \rho^\vee$ an irreducible quotient representation
isomorphic to the dual of $\rho$.
This in turn factors through $\sym^2(\rho \oplus \rho^\vee)$, and so the
resulting quadric has rank at most $2 \dim \rho = \dim \rho \oplus \rho^\vee$.

The reason for the claimed factorization is that the given map factors through the quotient $\rho\otimes (H^0(X, \omega_X)^{\rho^\vee}/K),$ where $K$ is the subspace of $H^0(X, \omega_X)^{\rho^\vee}$ pairing to zero with $\rho$; the quotient $H^0(X, \omega_X)^{\rho^\vee}/K$ is isomorphic as an $H$-representation to $\rho^\vee$.
\end{proof}

\begin{proposition}
	\label{proposition:1-dimensional-kernel}
	Suppose that the genus of $Y$ is at least $2$. Then there are no $\rho$
	of dimension $1$ in the kernel of $\overline{\nabla}_m$.
\end{proposition}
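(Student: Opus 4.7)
The plan is to argue by contradiction: a $1$-dimensional $\rho$ in the kernel of $\overline{\nabla}_m$ will produce a nonzero rank $2$ quadric vanishing on the canonical image of $X$, and this will force the irreducible curve $X$ to lie in a hyperplane, contradicting the nondegeneracy of the canonical map for a curve of genus $\geq 2$. First I would translate the kernel hypothesis into the vanishing of the full composition in \eqref{equation:symmetric-map-isotypic}, so that every element of $\on{im}(\alpha) \subset (\sym^2 H^0(X, \omega_X))^H$ yields a quadric vanishing on the canonical image of $X$---since the multiplication map $\sym^2 H^0(X, \omega_X) \to H^0(X, \omega_X^{\otimes 2})$ has kernel precisely the quadrics containing the canonical image.

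Next I would exhibit an explicit nonzero element of $\on{im}(\alpha)$. By the Chevalley-Weil formula recalled above, $\dim H^0(X, \omega_X)^{\rho^\vee} = (g - 1)(\dim \rho)^2 = g - 1 \geq 1$, so one can pick a nonzero $w \in H^0(X, \omega_X)^{\rho^\vee}$ in addition to a nonzero $v \in \rho$. Since $\dim \rho = 1$, the product $vw$ is automatically $H$-invariant, so $\alpha(v \otimes w) = vw$ is a bona fide element of $(\sym^2 H^0(X, \omega_X))^H$. Provided $v$ and $w$ are linearly independent, the quadric $vw$ has rank exactly $2$, in agreement with the bound of \autoref{lemma:rank-2d-quadric}. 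Linear independence is automatic when $\rho$ is nontrivial, since then $\rho$ and $\rho^\vee$ are distinct isotypic summands of $H^0(X, \omega_X)$; when $\rho$ is trivial, one uses $\dim H^0(Y, \omega_Y) = g \geq 2$ to pick $w$ outside the line $\mathbb{C} v \subset H^0(Y, \omega_Y)$.

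Finally I would use the geometry of rank $2$ quadrics to close the argument. A rank $2$ quadric factors as a product of two linear forms; in our case the factorization is manifest, $vw = v \cdot w$, so the zero locus of $vw$ in $\mathbb{P} H^0(X, \omega_X)$ is $V(v) \cup V(w)$. Since the canonical image of $X$ is irreducible (as $X$ is a connected smooth curve) and is contained in this union, it must lie entirely inside one of the two hyperplanes, say $V(v)$. That is exactly the statement that $v$ vanishes identically on $X$, contradicting $v \neq 0$; equivalently, it contradicts the nondegeneracy of the canonical map, which holds for any smooth curve of genus $\geq 2$---and here $g(X) \geq g(Y) \geq 2$ by Riemann-Hurwitz for the unramified cover $X \to Y$. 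The main point requiring care, and the only place where the hypothesis $g(Y) \geq 2$ enters directly, is the verification that the quadric $vw$ is nonzero in the case of trivial $\rho$.
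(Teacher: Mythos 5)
Your proof is correct and takes essentially the same route as the paper: a $1$-dimensional $\rho$ in the kernel produces a quadric of rank at most $2$ vanishing on the canonical image, which, being a union of two hyperplanes, contradicts irreducibility of $X$ and nondegeneracy of the canonical map. The only (welcome) difference is that you exhibit the quadric $vw$ explicitly and verify it is nonzero using Chevalley--Weil and $g \geq 2$, a point the paper leaves implicit behind \autoref{lemma:rank-2d-quadric}.
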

\begin{proof}
Any quadric of rank at most $2$ is supported on a union of hyperplanes.
Since the canonical map is always nondegenerate, its image cannot be contained in a
hyperplane. Since $X$ is smooth and
connected, it also cannot be contained in the vanishing locus of a quadric of rank at
most $2$. But \autoref{lemma:rank-2d-quadric} implies that if a $1$-dimensional representation was in the kernel of the $\overline{\nabla}_m$, the image of the canonical map would be contained in such a quadric.
\end{proof}
\begin{remark}
\label{remark:no-1-dim}
As in \autoref{section:proof-idea},
\autoref{proposition:1-dimensional-kernel} immediately implies if
$\Sigma_{g'}\to \Sigma_g$ is an $H$-cover, with $g\geq 2$, and $\rho: H\to
\mathbb{C}^\times$ is one-dimensional, no non-zero vector in $H^1(\Sigma_{g'}, \mathbb{C})^\rho$ has finite orbit under the (virtual) action of $\on{Mod}_g$.
One might say this verifies the ``Putman-Wieland conjecture for $1$-dimensional
representations.'' A related, but somewhat stronger statement is originally due to Looijenga
 \cite{looijenga:prym-representations}.
\end{remark}

\subsection{The dimension of $\rho$ is $2$ and $g = 2$}
\label{subsection:dim-2}
To simplify matters somewhat, we will restrict to the case $g = 2$, where all
the main ideas are already present and we further assume that $\rho$ is {\em not
self-dual}. Though see \autoref{remark:non-self-dual-case} for a remark on what
happens in the self-dual case.

In the case $\dim \rho = 2$, we have a $2 (g-1)= 2$ dimensional space 
\begin{align*}
	\hom_H(\rho^\vee, H^0(X,\omega_X))
\end{align*}
by Chevalley-Weil, so $H^0(X, \omega_X)^{\rho^\vee} \simeq (\rho^\vee)^{\oplus
2}$. We can consider the $2 + 2 \cdot 2$ subspace of $H^0(X,
\omega_X)$ given by $\rho \oplus H^0(X, \omega_X)^{\rho^\vee},$
where here $\rho$ lies in the kernel of $\theta$ from \autoref{align:theta-map}.
From this, we get $2$ quadrics $Q_1$ and $Q_2$ of rank $4$ vanishing on the
image of $X$ under the projection of the canonical map to 
$\mathbb P(\rho \oplus H^0(X, \omega_X)^{\rho^\vee})$.
Here, $Q_1$ and $Q_2$
are nonzero quadrics, corresponding to the universal quadrics
expressing the incidence relation between
$\rho$ and the $2$ copies of $\rho^\vee$.
\begin{remark}
	\label{remark:non-self-dual-case}
	This crucially uses that $\rho$ is not self-dual, as in the case $\rho$ is
symplectically self-dual, one of these quadrics may be identically the $0$
quadric. 
What changes in the above is that 
$\rho$ is already contained in
$H^0(X, \omega_X)^{\rho^\vee},$
so $\rho \oplus H^0(X, \omega_X)^{\rho^\vee}$ is not a subspace of
$H^0(X,\omega_X)$.

To see that one of the quadrics will be $0$ in the symplectically self-dual case,
we use the fact that there there is an isomorphism $\rho \simeq
\rho^\vee$ inducing an injection $\wedge^2 \rho \to \rho \otimes \rho^\vee\simeq \rho\otimes \rho,$
(using that we are in characteristic $0$ to split the natural surjection) whose
projection to $\sym^2 \rho$ vanishes. This induces the $0$ quadric.
When $\rho$ is not self-dual, we always obtain two quadrics of rank $4$. When
$\rho$ is symplectically self-dual, we obtain one quadric of rank $4$, while
when $\rho$ is orthogonally self-dual, we obtain one quadric of rank $4$ and one
quadric of rank $3$.
\end{remark}

\begin{figure}
\includegraphics[trim={0 12cm 0 0},scale=.5]{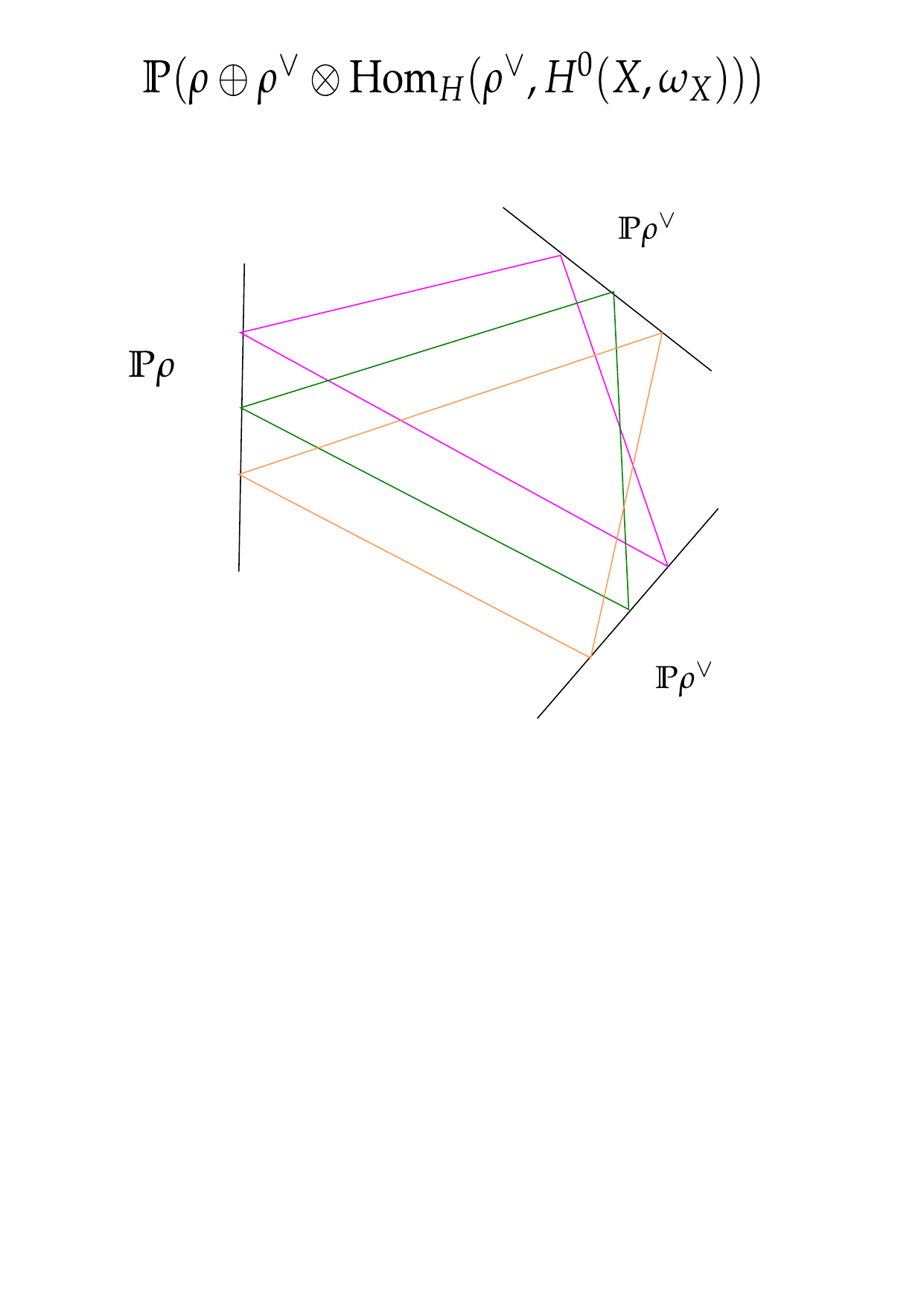}
\caption{A picture of the variety $V$ swept out by colored planes, which is a
component of the variety $Q_1 \cap Q_2$ cut out by
certain quadrics containing the canonical image of $X$ in the case that a $2$-dimensional
representation lies in the kernel of $\theta$.
Each colored plane corresponds to a point $x \in \bp \rho$ and is the span $x$
together with the corresponding
hyperplanes in each copy of $\bp \rho^\vee$ on which $x$ vanishes.
}
	\label{figure:2-dim}
\end{figure}

To make the situation even more concrete, continue supposing $\dim \rho = 2$ and $g =
2$, give $\rho$ coordinates $x,y$, and give the two copies of $\rho^\vee$
dual coordinates $u,v$ and $s,t$. Then, the two quadrics $Q_1$ and $Q_2$ can be concretely expressed as
$xu - yv = 0$ and $xs - yt = 0$.
The variety $Q_1 \cap Q_2$ is a degree $4$ variety which has two components:
it is the union of the codimension $2$
plane $P$ given by $x = y = 0 $ and the degree $3$ variety $V$ given by $xu - yv
= xs - yt = vs-tu=0$.
Since the curve $X$ is irreducible and nondegenerate, it cannot lie in the
component $P$
and so must be contained in $V$.
We have now managed to produce a new equation containing $X$, $vs - tu$,
which is only a function of the variables in $H^0(X, \omega_X)^{\rho^\vee}$ and
does not depend on the $\rho$ component. Moreover, it is the degeneracy locus of
the matrix
\begin{align*}
\begin{pmatrix}
s & t  \\
u & v
\end{pmatrix},
\end{align*}
which already gives a hint as to where the failure of generic global generation
of $E^\rho \otimes \omega$
may come from.

There is yet another geometric perspective on the variety 
$xu - yv = xs - yt = vs-tu=0$.
Starting with a point $[x,y]$ in $\mathbb P \rho$, we get corresponding hyperplanes
in $\mathbb P \rho^\vee$ at which $[x,y]$ vanishes. In the case $\dim \rho = 2$, this
hyperplane has codimension $1$ in the $1$-dimensional $\mathbb P \rho^\vee$, and hence
corresponds to a point.
The variety 
$xu - yv = xs - yt = vs-tu=0$ expresses the incidence relation between these two;
it parameterizes tuples $(x,y,s,t,u,v)$ so that $[x,y]$ vanishes on both $[s,t]$
and $[u,v]$. We will next see a similar phenomenon when $\dim \rho = 3$.

\subsection{The dimension of $\rho$ is $3$ and $g = 2$}
\label{subsection:dim-3}

We next consider the case $\dim \rho = 3$ and $g = 2$.
In this subsection, we will explain how to show $E^\rho \otimes \omega_Y$ fails to be generically
globally generated, which will not in general lead to a contradiction, but which
motivates our arguments in \autoref{section:proof-idea}, see
\autoref{subsection:summary}.
As in the dimension $2$ case of \autoref{subsection:dim-2}, we obtain a variety cut out by $3$ quadrics
expressing the incidence between $\rho$ and $H^0(X, \omega_X)^{\rho^\vee} \simeq
(\rho^\vee)^{\oplus 3}$.
Viewing this as a subvariety of $\mathbb P (\rho \oplus (\rho^\vee)^{\oplus 3})$
we find this degree $8$, codimension $3$ subvariety is the union of the codimension $3$
plane $\rho = 0$ and a certain degree $7$ codimension $3$ variety, which can be
expressed as the closure $V$ of the incidence variety 
\begin{align*}
\{ (z,a,b,c) \in \rho \oplus
(\rho^{\vee})^{\oplus 3} : z \neq 0, z(a) = z(b) = z(c) = 0\}.
\end{align*}
As in the previous case, the image of $X$ is irreducible and nondegenerate, so cannot lie on the codimension
$3$ plane $\rho = 0$, and hence lies in $V$.

\begin{figure}
\includegraphics[trim={0 12cm 0 0},scale=.5]{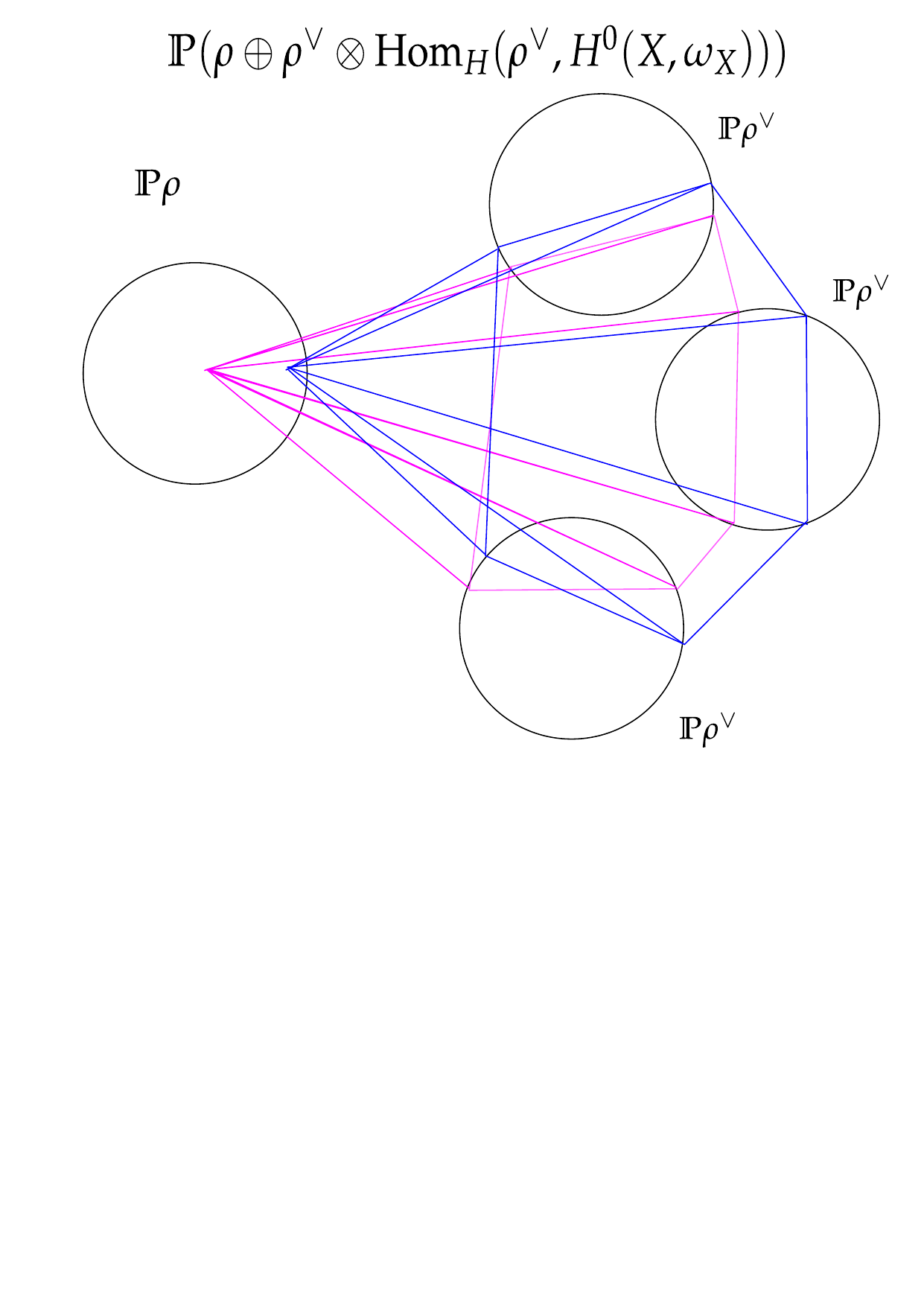}
\caption{A picture of the variety $V$ swept out by colored planes, which is
contained in the intersection of
certain quadrics containing the canonical image of $X$ when a $3$-dimensional
representation $\rho$ lies in the kernel of $\theta$.
Each colored plane corresponds to a point $x \in \bp \rho$ and is the span $x$
together with the corresponding
hyperplanes in each copy of $\bp \rho^\vee$ on which $x$ vanishes.
}
	\label{figure:3-dim}
\end{figure}

Next, let us try to understand how the non-generically globally generated
vector bundle appearing in \autoref{section:proof-idea}
is related to this geometric setup. 
To find it, we project $V$ away from $\mathbb P \rho
\subset \mathbb P (\rho \oplus (\rho^\vee)^{\oplus 3})$.
Under this projection, we get that $V$ projects to 
\begin{align*}
W := \{(a,b,c) \in (\rho^{\vee})^{\oplus 3} : \text{ there exists } z \neq 0, z(a) = z(b) = z(c) = 0\}.
\end{align*}

This can equivalently be expressed as
\begin{align*}
W = \{(a,b,c) \in (\rho^{\vee})^{\oplus 3} : (a,b,c) \text{ lie on a common line
in } \mathbb P \rho\}.
\end{align*}
The map $\iota: X \to \mathbb P (\rho \oplus \hom_H(\rho^\vee, H^0(X, \omega_X)))$ is induced by a sub-linear system of
$\omega_X$. That is, $\mathscr O_{\mathbb P (\rho \oplus \hom_H(\rho^\vee, H^0(X,
\omega_X)))}(1)$ pulls back to $\omega_X$, or a subsystem
thereof if $\omega_X$ has basepoints in $\rho \oplus \hom_H(\rho^\vee, H^0(X,
\omega_X)) \subset H^0(\omega_X)$.
Therefore, the above map $\iota$
is induced by a map of sheaves
$H^0(X, \omega_X)^{\rho^\vee} \otimes \mathscr O_X \to \omega_X$.
Since $\rho^\vee \otimes \hom_H(\rho^\vee, H^0(X, \omega_X)) \simeq H^0(X,
\omega_X)^{\rho^\vee}$, this also corresponds to a map
$\rho^\vee \otimes \hom_H(\rho^\vee, H^0(X, \omega_X)) \to \omega_X$ or
equivalently
$\psi: \hom_H(\rho^\vee, H^0(X, \omega_X))\otimes \mathscr O_X \to \rho \otimes \omega_X$.
This is an $H$-equivariant map, and so descends to a map of vector bundles on
$Y$.

Under the above translation, the condition defining $W$, that $a,b,c$ are 
collinear,
corresponds to the map $\psi$ not having maximal rank. 
%since the $a,b,c$ all lie on a common hyperplane corresponding to some point of $\rho$.
Here $\rho$ denotes a trivial bundle of rank $r$ on $X$ with a specified
$H$-action, and we use $E^\rho$ to denote the corresponding descent to $Y$,
which pulls back to $\rho$ on $X$.
Note that we can identify $H$-equivariant maps 
\begin{align*}
\hom_H(\rho^\vee, H^0(X, \omega_X)) \simeq
\hom_H(\rho^\vee, \hom_X(\mathscr O_X, \omega_X)) \simeq \hom_H(\mathscr O_X, \rho
\otimes \omega_X).
\end{align*}
Upon descending to $Y$, elements of this vector space correspond to elements of
$H^0(Y, E^\rho \otimes \omega_Y)$.
From this, the map $\psi$ descends to a map of vector bundles on $Y$,
\begin{align*}
\xi: H^0(Y, E^\rho \otimes \omega_Y) \otimes \mathscr O_Y \to E^\rho \otimes
\omega_Y.
\end{align*}
Since the pullback $\xi|_X$ factors through a proper sub-bundle of $E^\rho \otimes
\omega_Y$, as was shown above, the same holds true of $\xi$.
Further, unwinding the definition of $\xi$ shows it is none other than the
natural evaluation map on global sections. This implies that $E^\rho \otimes
\omega_Y$ is not generically globally generated.

We note that at this point we have not ruled out the possibility that
Putman-Wieland fails in this $r = 3, g =2$ case because $3 = r \geq
g = 2$, see \autoref{subsection:summary}. All we have shown that 
$E^\rho \otimes \omega_Y$ fails to be generically globally generated.

\subsection{The general case}
\label{subsection:general-case}

There are essentially no new ideas in generalizing the lack of global generation observed above beyond the $\dim \rho = 3, g = 2$
case.
We consider only the case $\rho$ is {\em not self-dual}, but the self-dual case can be
handled similarly, with appropriate modifications. 
See \autoref{remark:non-self-dual-case} for an example of this.
We will explain how to show $E^\rho \otimes \omega_Y$ fails to be generically
globally generated. By combining this with the argument using Clifford's Theorem
for vector bundles in \autoref{section:proof-idea}, we find that we must have
$\dim \rho \geq g$,
see also \autoref{subsection:summary}

Say $\dim \rho = r$ and $Y$ has genus $g$.
By Chevalley-Weil, we obtain a variety cut out by $r(g-1)$ quadrics
expressing the incidence between $\rho$ and $H^0(X, \omega_X)^{\rho^\vee} \simeq
(\rho^\vee)^{\oplus r(g-1)}$.
As in \autoref{subsection:dim-3}, these quadrics cut out a reducible variety
in $\mathbb P(\rho \oplus (\rho^\vee)^{\oplus r(g-1)})$,
one of whose components is the plane $\rho = 0$, with the other being the closure of
the subvariety $\{ (z,a_1, \ldots, a_{r(g-1)} \in \rho \oplus
(\rho^{\vee})^{\oplus r(g-1)} : z \neq 0, z(a_i) = 0\}$.
The canonical curve cannot lie in the plane, and thus lives in the latter
subvariety. Projecting this subvariety away from $\mathbb P \rho$ gives a map from $X$
to 
\begin{align*}
W := \{(a_1, \ldots, a_{r(g-1)}) \in (\rho^{\vee})^{\oplus r(g-1)} :
(a_1,\ldots,a_{r(g-1)}) 
\\
\text{ lie on a common hyperplane in } \mathbb P \rho \}.
\end{align*}
As in the three dimensional case of \autoref{subsection:dim-3}, the map $X \to \mathbb P H^0(X,
\omega_X)^{\rho^\vee}$ 
gives a map of vector bundles on $X$,
$\hom_H(\rho^\vee, H^0(X, \omega_X))\otimes \mathscr O_X \to \rho \otimes
\omega_X$.
The condition that $X$ factors through $W$ tells us this map of vector
bundles drops rank.
Letting $E^\rho$ denote the descent of $\rho$ to $Y$,
the above map descends to $Y$ and we obtain a map
of vector bundles
$\xi: H^0(Y, E^\rho \otimes \omega_Y) \otimes \mathscr O_Y \to E^\rho \otimes
\omega_Y$
which again drops rank and corresponds to the natural evaluation map.
Because this evaluation map drops rank, $E^\rho \otimes \omega_Y$ is not generically globally
generated. 
By combining this with the argument using Clifford's Theorem
for vector bundles in \autoref{section:proof-idea}, we find that we must have
$\dim \rho \geq g$, as described in \autoref{subsection:summary}.

\section{Examples}
\label{section:examples}
In this section, we give several counterexamples to the Putman-Wieland
conjecture in low genus.
Starting in \autoref{subsection:low-genus-counterexamples}, we give a method to produce genus $0$ and $1$ counterexamples to Putman-Wieland.
We give a number of examples, showing that this realizes all the genus $0$
cyclic counterexamples which McMullen found \cite{mcmullen:braid-groups}, as
well as the ``Eierlegende-Wollmilchsau" genus $1$ counterexample in the original
Putman-Wieland paper \cite[Appendix A]{putmanW:abelian-quotients}.
We also produce many new ``origami'' counterexamples in genus $1$. In particular,
\autoref{example:2-dim-symplectic} gives an infinite sequence of families of ``origami"
curves, whose Jacobians have isotrivial isogeny factors of arbitrarily large dimension.
We conclude the section with a number of questions about creating families of
curves whose Jacobians have isotrivial isogeny factors.

\subsection{Genus 0 and 1 counterexamples}
\label{subsection:low-genus-counterexamples}
There are many cases where $\PW_{0,n}$ and $\PW_{1,n}$ fail to hold (see e.g.
\cite[Theorem 8.3]{mcmullen:braid-groups} and ~\cite[Appendix A]{putmanW:abelian-quotients}). From our point of view, these examples admit purely Hodge- and representation-theoretic explanations.

Let us now formally define what it means for a cover to give a counterexample to
Putman-Wieland.
\begin{definition}
	\label{definition:pw-counterexample}
	Suppose $h: \Sigma_{g',n'} \to \Sigma_{g,n}$
is a finite covering of topological surfaces.
Let $\Gamma \subset \on{PMod}_{g,n+1}$
denote the finite index subgroup preserving $h$.
The action of $\Gamma$ on $\pi_1(\Sigma_{g',n'})$ induces an action on
$H_1(\Sigma_{g',n'}, \mathbb C)$ which preserves the subspace spanned by homology classes
of loops around punctures, and hence also induces an action on
$H_1(\Sigma_{g'}, \mathbb C)$.
We say $h$ {\em furnishes a counterexample to Putman-Wieland}, if
there is some nonzero $v \in H_1(\Sigma_{g'}, \mathbb C)$ with
finite orbit under $\Gamma$.

Let $Y$ be a compact Riemann surface of genus $g$ with $n$ marked points $p_1,
\ldots, p_n$.
Upon identifying $\pi_1(Y-\{p_1, \ldots, p_n\}) \simeq \pi_1(\Sigma_{g,n})$,
let $f: X \to Y$ be the
covering of compact Riemann surfaces, ramified at most over $p_1, \ldots, p_n$,
corresponding to the topological cover $h$.
If $h$ furnishes a counterexample to Putman-Wieland, we also say $f: X \to Y$ furnishes a counterexample to
Putman-Wieland.
\end{definition}
\begin{remark}
\label{remark:}
Even though the Putman-Wieland conjecture assumes $g \geq 2$, we still say $f: X
\to Y$ furnishes a counterexample to Putman-Wieland when $Y$ has genus $g \leq 1$.
\end{remark}

To state our criterion for producing counterexamples to Putman-Wieland, we set some notation to describe families of covers of curves.
\begin{notation}
	\label{notation:versal-family}
	We fix non-negative integers $(g,n)$ so that $n \geq 1$ if $g = 1$ and
	$n \geq 3$ if $g = 0$, i.e., $\Sigma_{g,n}$ is hyperbolic.
	Let $\mathscr{M}$ be a connected complex variety. A \emph{family
	of $n$-pointed curves of genus $g$ over $\mathscr{M}$} is a smooth
	proper morphism $\pi: \mathscr{C}\to \mathscr{M}$ of relative dimension one, with geometrically
	connected genus $g$ fibers, equipped with $n$ sections $s_1,\cdots,
	s_n: \mathscr{M}\to \mathscr{C}$ with disjoint images. Call such a family \emph{versal} if the
	induced map $\mathscr{M}\to \mathscr{M}_{g,n}$ is dominant and \'etale.
	Here $\mathscr M_{g,n}$ denotes the Deligne-Mumford moduli stack of $n$-pointed genus
	$g$ smooth proper curves with geometrically connected fibers.
\end{notation}

Recall that an irreducible representation $\rho$ is {\em symplectically self-dual} if
$(\wedge^2 \rho)^H \neq 0$, i.e., there is a nonzero map $\on{triv} \to \wedge^2
\rho$.
\begin{proposition}\label{proposition:representation-theoretic-criterion}
    With notation as in \autoref{notation:versal-family}, let $m\in \mathscr{M}$
    be a point. Let $X=\mathscr{X}_m, Y=\mathscr{C}_m$. Let $V=H^1(X,
    \mathbb{Q})/H^1(Y, \mathbb{Q})$. 
    Suppose that there exists an irreducible
    representation $$\rho: H\to \on{GL}_r(\mathbb{Q})$$ such that,
    letting $V^\rho$ denote the $\rho$-isotypic piece of $V$,
    $V^\rho\otimes
    \mathbb{C}\simeq \oplus_{i=1}^s \rho_i^{n_i}$ with $\rho_1, \ldots, \rho_s$
    irreducible and pairwise distinct. Suppose that either:
    \begin{enumerate}
        \item The Hodge decomposition of $V^\rho\otimes\mathbb{C}$ is an
	isotypic decomposition, i.e. there is a subset $S \subset \{1, \ldots,
	s\}$ so that $F^1V^\rho\otimes \mathbb{C} \simeq \bigoplus_{i \in S}
	\hspace{.1cm}
	\rho_i^{n_i}$, or
        \item for every $i$, $\rho_i$ is symplectically self-dual, and $\rho_i$
	appears with multiplicity at most $1$ in $F^1V^\rho\otimes \mathbb{C}$.
    \end{enumerate}
    Then $f: X \to Y$ furnishes a counterexample to Putman-Wieland.
\end{proposition}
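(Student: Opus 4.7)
The plan is to prove that under either hypothesis, the $\Gamma$-action on $V^\rho$ factors through a finite quotient; since $V^\rho$ is nonzero, this produces nonzero finite-orbit vectors, which dualize via Poincar\'e duality to nonzero finite-orbit vectors in $\ker(f_*)\subset H_1(X,\mathbb{C})$, giving the counterexample. The crux is to show that the Hodge subbundle $F^1V^\rho\subset V^\rho\otimes\mathscr{O}_{\mathscr{M}}$ is preserved by the Gauss--Manin connection $\nabla$; equivalently, the $H$-equivariant, $\mathscr{O}_{\mathscr{M}}$-linear map
\[
\overline\nabla : F^1V^\rho \to (V^\rho/F^1V^\rho)\otimes\Omega_{\mathscr{M}}
\]
induced by $\nabla$ via Griffiths transversality must vanish.

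In case (1), $F^1V^\rho\otimes\mathbb{C}\simeq\bigoplus_{i\in S}\rho_i^{n_i}$ and $(V^\rho/F^1V^\rho)\otimes\mathbb{C}\simeq\bigoplus_{i\notin S}\rho_i^{n_i}$ share no irreducible $H$-constituents, so Schur's lemma immediately gives $\overline\nabla=0$. Case (2) requires more work: the polarization $Q$ on $V^\rho$, together with $\nabla Q=0$ and the isotropy $Q(F^1V^\rho,F^1V^\rho)=0$, forces $\overline\nabla$ to be a \emph{symmetric} $\Omega_{\mathscr{M}}$-valued bilinear form on $F^1V^\rho$ (the standard symmetry of the infinitesimal period map). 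Writing $F^1V^\rho\simeq\bigoplus_i\rho_i\otimes W_i$ with $\dim W_i\leq 1$, the $H$-invariant bilinear forms on $F^1V^\rho$ are spanned by tensors of the form $\omega_{\rho_i}\otimes\beta_i$, where $\omega_{\rho_i}$ is the symplectic form on $\rho_i$ and $\beta_i$ is a bilinear form on $W_i$; since $\omega_{\rho_i}$ is antisymmetric, symmetry in $\rho_i\otimes W_i$ forces $\beta_i\in\wedge^2 W_i = 0$. So $\overline\nabla=0$ in this case too.

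Once $\overline\nabla=0$, the subbundle $F^1V^\rho$ is $\nabla$-flat; since $\nabla$ is defined over $\mathbb{R}$, its complex conjugate $\overline{F^1V^\rho}$ is also flat, and the Hodge decomposition $V^\rho\otimes\mathbb{C}=F^1V^\rho\oplus\overline{F^1V^\rho}$ is $\Gamma$-invariant. The Hodge--Riemann relations then give a $\Gamma$-invariant positive definite Hermitian form $h(u,v)=\sqrt{-1}\,Q(u,\bar v)$ on $F^1V^\rho|_m$, so the image of $\Gamma$ acting on $V^\rho\otimes\mathbb{R}$ lies in a compact unitary group. On the other hand, $\Gamma$ commutes with the $H$-action, so it preserves the lattice obtained by projecting the integral lattice of $H^1(\Sigma_{g'},\mathbb{Z})$ to the $\rho$-isotypic piece, and hence acts through a discrete subgroup of $\gl(V^\rho\otimes\mathbb{R})$. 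A discrete subgroup of a compact group is finite, so $\Gamma$ acts on $V^\rho$ through a finite group, completing the proof.

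The principal obstacle is the vanishing of $\overline\nabla$ in case (2), where the source and target of $\overline\nabla$ share irreducible constituents and a naive Schur argument fails. The key step is to extract the extra symmetry of $\overline\nabla$ from the polarization; the hypothesis ``multiplicity at most $1$ in $F^1V^\rho$'' is then precisely what forces the space of $H$-invariant symmetric forms on $F^1V^\rho$ to vanish via $\wedge^2 W_i = 0$. The subsequent rigidity and finiteness steps---flatness of the Hodge decomposition, Hodge--Riemann positivity, and the discrete-in-compact conclusion---are standard in the theory of variations of Hodge structures.
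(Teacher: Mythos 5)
Your proposal is correct and follows essentially the same route as the paper: kill $\overline\nabla$ by Schur's lemma in case (1), and in case (2) by combining the symmetry of the infinitesimal period map with symplectic self-duality and multiplicity one (the paper phrases this as the multiplication map factoring through $(\sym^2\rho_i)^H=0$, which is the same vanishing you derive from $\omega_{\rho_i}$ being alternating and $\wedge^2 W_i=0$). The only difference is that where the paper cites Katz for the step ``flat Hodge filtration implies finite monodromy,'' you supply the standard direct argument (flat Hodge decomposition, Hodge--Riemann positivity, discreteness from the lattice), which is a fine substitute.
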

Before proceeding with the proof, we comment on some restrictions on the
representations that
must be satisfied for the conditions to hold.
\begin{remark}
	\label{remark:}
	Note that in
	\autoref{proposition:representation-theoretic-criterion}(1),
	the representations $\rho_i$ appearing in $F^1 V^\rho$ must all be
	non-self dual. The reason for this is that, as $V^\rho \otimes \mathbb
	C$ carries a Hodge
	structure, the quotient $V^\rho \otimes \mathbb C/F^1 V^\rho \otimes
	\mathbb C$ is dual to $F^1 V^\rho \otimes \mathbb C$.
	However, if some $\rho_i$ is self dual, it will appear in both the sub $F^1 V^\rho \otimes
	\mathbb C$
	and the quotient $V^\rho \otimes \mathbb C/F^1 V^\rho \otimes
	\mathbb C$, and then the decomposition will not be isotypic.
\end{remark}

\begin{remark}
\label{remark:chevalley-weil-counterexample-constraint}
    Note that by the Chevalley-Weil formula
    theorem \cite[Theorem 2.1]{chevalleyW:uber}, as $H$-representations,
    $H^0(X, \omega_X) \simeq \on{triv} \oplus \on{reg}^{g-1} \oplus W,$ for
    $\on{triv}$ the trivial representation, and $\on{reg}$ the regular
    representation, where $W$ is an auxiliary representation which is $0$ if the
    cover is unramified, and is, in general, explicitly computable in terms of
    the ramification data of $f$.
    Therefore, conditions (1) and (2) of
    \autoref{proposition:representation-theoretic-criterion} each imply that $g\leq 1$.

    As pointed out by a referee, to verify the requirement $g \leq 1$, one only
    needs the easier fact that the
    $H$-module $H^1(X, \mathbb{Q})$ contains $\on{triv}^{\oplus 2} \oplus
    \on{reg}^{2g-2}$.
    This is somewhat easier to prove than Chevalley-Weil by considering an $H$-equivariant
    triangulation of $X$, and can be verified with a proof analogous to that of
    \cite[Proposition 1.1]{grunewald2015arithmetic}.
\end{remark}
\begin{remark}
	\label{remark:punctures-constraint}
	Adding on to \autoref{remark:chevalley-weil-counterexample-constraint}, the Chevalley Weil formula shows that for faithful representations we can only hope to apply
\autoref{proposition:representation-theoretic-criterion}(2) to families where $g =
1, n = 1$, as if there is more
than $1$ marked point (and the $H$-representation is nontrivial at $\geq 2$
points) or the genus is more than $1$, any such representation will appear with multiplicity greater than $1$.
\end{remark}

\begin{proof}[Proof of \autoref{proposition:representation-theoretic-criterion}]
Let $\mathscr H^\rho$ be the $\rho$-isotypic part of $\mathscr H := R^1 \pi'_* \mathbb Q
\otimes \mathscr O_{\mathscr M}$.
The connection on $\mathscr H$ induces a connection
$\nabla: \mathscr H^\rho \to \mathscr H^\rho \otimes \Omega^1_{\mathscr M}$.
In turn, this induces a $\mathscr O_{\mathscr M}$-linear map
$\overline{\nabla} : F^1 \mathscr H^\rho \to (\mathscr H^\rho/F^1 \mathscr
H^\rho) \otimes \Omega^1_{\mathscr M}$.
We claim that either of the conditions
in the theorem statement imply that $\overline{\nabla}$ is identically zero. By e.g.~\cite[(4.4.2)]{katz:pcurvature-and-hodge}, this implies $\mathscr H^\rho$
has finite monodromy, and so $\PW_{g,n}$ fails.

We now verify $\overline{\nabla}= 0$ in the two cases. Set $V^\rho=(\mathscr{H}^\rho)_m$. In case (1), $\overline
\nabla$ is an $H$-equivariant map
\begin{align*}
\oplus_{i \in S} \rho_i \otimes \hom(\rho_i, F^1 V^\rho) \to \oplus_{j
\notin S} \rho_{j} \otimes \hom(\rho_{j},
V^\rho/F^1 V^\rho) \otimes \Omega_{\mathscr M}
\end{align*}
between $H$-representations with no irreducible subrepresentations in common, and therefore must vanish identically.

To verify (2), 
observe that an irreducible symplectically self-dual $H$-representation $\pi$ satisfies
$(\sym^2 \pi)^H= 0$.
Indeed
$\hom(\on{triv}, \wedge^2\pi)$ is non-zero by assumption.
By Schur's lemma, $\hom(\on{triv}, \pi \otimes \pi)$ has dimension exactly $1$ (as $\pi$ is self-dual).
We then find
\begin{align*}
\hom(\on{triv}, \pi \otimes \pi) =
\hom(\on{triv}, \sym^2 \pi) \oplus \hom(\on{triv}, \wedge^2\pi)
\end{align*}
and so $\hom(\on{triv}, \sym^2 \pi) = 0$.

To check $\overline{\nabla}$ vanishes, we can write it as a sum of maps
$\overline{\nabla}_i : F^1 \mathscr H_{\rho_i} \to (\mathscr H_{\rho_i}/F^1 \mathscr
H_{\rho_i}) \otimes \Omega^1_{\mathscr M}$
and show each $\overline{\nabla}_i$ vanishes.
If the multiplicity of $\rho_i$ in $F^1 \mathscr H_{\rho_i}$ is $0$, we
have $F^1 \mathscr H_{\rho_i} = 0$, and so $\overline{\nabla}_i = 0$.
Otherwise, $\rho_i$ appears in $F^1 \mathscr H_{\rho_i}$ with multiplicity $1$.
For $\alpha$ as in
\eqref{equation:symmetric-map-repeated},
since $\rho_i$ appears with multiplicity $1$ in $H^0(X, \omega_X)$,
$\on{\im} \alpha \subset \sym^2 H^0(X, \omega_X)^H$ factors
through a copy of $\sym^2(\rho_1)^H = 0$. Hence $\alpha$ vanishes, so
$\overline{\nabla}_i$ vanishes.
\end{proof}
We now apply \autoref{proposition:representation-theoretic-criterion} to some
examples. We first consider the ``Eierlegende-Wollmilchsau'' example discussed in the paper of Putman
and Wieland where the Putman-Wieland conjecture was originally formulated.
\begin{example}[{\cite[Appendix A]{putmanW:abelian-quotients}}]
\label{example:quaternions}
Let $Q_8$ be the quaternion group (of order $8$). Putman and Wieland describe in
\cite[Appendix A]{putmanW:abelian-quotients} a family of $Q_8$ covers of genus one
curves $\mathscr{X}\to \mathscr{C}$, such that the vector space $V$ of
\autoref{proposition:representation-theoretic-criterion} is
$\mathbb{Q}$-irreducible as a $Q_8$-representation, with $F^1V_{\mathbb{C}}$
irreducible and symplectically self-dual. Hence \autoref{proposition:representation-theoretic-criterion}(2) applies.
\end{example}
\begin{example}[{\cite[Theorem 8.3]{mcmullen:braid-groups}}]
\label{example:genus-0}
The $8$ examples appearing in \cite[Table 10]{mcmullen:braid-groups} all satisfy
\autoref{proposition:representation-theoretic-criterion}(1), by the displayed
equation in the proof of \cite[Theorem 8.3]{mcmullen:braid-groups}. For example,
this includes a family of $\mathbb{Z}/4\mathbb{Z}$-covers of $\mathbb{P}^1$ which are totally ramified over $4$ points.
This family may be expressed in terms of equations as $y^4 = x(x-1)(x-\lambda)$,
for $\lambda \in \mathbb A^1_\lambda - \{0, 1\}$ the parameter of the family.
\end{example}
We can also make new examples using 
\autoref{proposition:representation-theoretic-criterion}.
\begin{example}
\label{example:2-dim-symplectic}
Let $H$ be any non-abelian finite subgroup of $\on{SL}_2(\mathbb C)$, with $\rho: H\hookrightarrow  \on{SL}_2(\mathbb C)$ the given embedding.
Suppose $H$ is generated by $2$
elements, say  $\gamma$ and $\delta$.
Then we claim there is a cover with Galois group $H$, furnishing a counterexample to $\PW_{1,1}$.

Indeed, since $H$ is non-abelian and generated by two elements, any two generators of $H$ cannot commute, 
so their commutator $c$ is nontrivial. Moreover $\rho$ is automatically
irreducible, as $H$ is non-abelian. Finally, note that $\rho$ is symplectically
self-dual, as the determinant yields a symplectic form preserved by $\rho$.
Since $\rho$ is two-dimensional
and semisimple, we obtain that $\rho(c)$ has two eigenvalues which are inverse
to each other and neither is equal to $1$.
Now, let $Y$ be a once punctured genus $1$ curve. Identify $\pi_1(Y, y) \simeq
F_2$, and let $X \to Y$ be the cover associated to the representation $F_2 \to H$
sending the first generator to $\gamma$ and the second
 to $\delta$.
It follows from the Chevalley-Weil formula, as in 
\cite[Theorem 2.1]{naeff:chevalley} that $\rho$ appears with multiplicity $1$ in
$H^0(X, \omega_X)$. Further, any Galois conjugate of $\rho$ is also faithful, and by
the same reasoning, it also appears with multiplicity $1$ in $H^0(X,
\omega_X)$.
Therefore, 
\autoref{proposition:representation-theoretic-criterion}(2)
applies and $f: X \to Y$ furnishes a counterexample to Putman-Wieland.

The non-abelian finite subgroups of $\on{SL}_2(\mathbb C)$ are the dicyclic groups and three exceptional groups
(the binary octahedral, binary icosahedral, and binary tetrahedral groups),
see
\cite{groupnames-2-dim-symplectic}.
One can directly check that each such group is generated by two elements.
Hence, for each such group, we can find a cover furnishing a counterexample to
Putman-Wieland with $g = 1$ and $n = 1$.

The cover of $\Sigma_{1,1}$ associated to an embedding of the order $4n$ dicyclic group
$\on{Dic}_n$ into $\on{SL}_2(\mathbb C)$ as above has genus $2n-1$.
By analyzing the representation theory of dicyclic groups,
one may verify the dimension of the fixed part is $n$ if $n$ is even and $n-1$ if $n$ is odd.
In particular, the dimension of this fixed part tends to $\infty$ as $n \to
\infty$.
%the number of 2-dimensional symplectic self-dual representations is $\lfloor
%n/2 \rfloor$, and each appears once; the trivial representation also appears
%once, and orthogonal self-dual 2-dimensional representations account for the
%remainder of the 2n-1 dimensional abelian variety
\end{example}
Recall that a cover of a curve of genus one, branched over one point as in \autoref{example:2-dim-symplectic}, is called an \emph{origami curve}.

We can also make examples of representations of dimension more than $2$.
We expect there to be an unwieldy collection of these larger-dimensional
examples, and it is an enjoyable exercise to look for them.
\begin{example}
\label{example:4-dim-symplectic}
Let $H$ be the order $32$ group
given in terms of generators and relations by
\begin{equation}
H = \langle a,b,c | a^4=1, b^4=a^2, c^2=b\cdot a \cdot b^{-1}=a^{-1}, a\cdot c=c \cdot a, c \cdot b \cdot c^{-1}=a^{-1}b^3 \rangle
\end{equation}
As described at
\cite{groupnames-C4.10D4}, the commutator subgroup of $H$ is isomorphic to $(\mathbb Z/2\mathbb Z)^2$.
Since $c^2 = a^{-1}$, this group is generated by $b$ and $c$.
One may directly verify that $cbc^{-1}b^{-1}$ is not central. 
%Here is magma code checking this:
%F<a,b,c> := FreeGroup(3);
%G<x,y,z> := quo< F | a^4=1, b^4=a^2, a*c^2*b=a*b* a =b, a* c=c * a, a*c*b=b^3*c >;
%G;
%Order(G);
%Generators(G);
%CommutatorSubgroup(G);
%H<x,y,z> := PermutationGroup(G);
%c := z*y*Inverse(z)*Inverse(y);
%Order(c*z*c*Inverse(z));
%Order(c*y*c*Inverse(y));
Let $Y$ be a once punctured genus $1$ curve. Upon identifying $\pi_1(Y, y) \simeq
F_2$, let $X \to Y$ be the cover associated to the representation $F_2 \to H$
sending the first generator to $c$ and the second to $b$.
There is a unique $4$-dimensional symplectic representation $\rho$ of $H$, and under
this representation, the character $\on{tr} \rho(cbc^{-1}b^{-1}) = 0$. 
Since $cbc^{-1}b^{-1}$ has order $2$, its eigenvalues are $\pm 1$, and as they sum to $0$, two must be $1$
and two must be $-1$.
The Chevalley-Weil formula as in \cite[Theorem 2.1]{naeff:chevalley}
implies $\rho$ appears with multiplicity $1$ in $H^0(X, \omega_X)$.
Since $\rho$ is the only symplectically self-dual representation of $H$, it has
no non-isomorphic Galois conjugates so we conclude $s = 1$ in
\autoref{proposition:representation-theoretic-criterion}.
We therefore conclude by 
\autoref{proposition:representation-theoretic-criterion}(2)
that $f: X \to Y$ furnishes a counterexample to Putman-Wieland, where here $g =
1$ and $n = 1$.
%magma code
%G := SmallGroup(32,8);
%Generators(G);
%for x in G do
%for y in G do
%if #sub<G|x,y> eq 32 then
%   print x*y*x^-1*y^-1;
%end if;
%end for;
%end for;
%%This abelian variety has a $4$ dimensional fixed part, and the remaining part
%%is varying
\end{example}
%\begin{remark}
%\label{remark:}
%The Chevalley Weil formula shows that for faithful representations we can only hope to apply
%\autoref{proposition:representation-theoretic-criterion}(2) to families where $g =
%1, n = 1$, as if there is more
%than $1$ marked point or the genus is more than $1$, any representation will appear with multiplicity greater than $1$.
%\end{remark}

\subsection{Questions on families with large isotrivial isogeny factor}
\label{subsection:questions}

The examples \autoref{example:quaternions}-\autoref{example:4-dim-symplectic}
above seem interesting applications of
\autoref{proposition:1-dimensional-kernel}, but they only scratch the surface.
We were essentially able to characterize all $2$-dimensional counterexamples to
Putman-Wieland coming from
\autoref{proposition:representation-theoretic-criterion}(2) in
\autoref{example:2-dim-symplectic}, while we gave only a single $4$-dimensional
counterexample in \autoref{example:4-dim-symplectic} via a non-systematic
examination of $4$-dimensional symplectic representations.
Note such counterexamples can only occur when $g = 1$ and $n =1$, as remarked in
\autoref{remark:punctures-constraint}.
We suspect there are infinitely many more counterexamples obtainable in this
way, but do not have any systematic approach to searching for them.
\begin{question}
\label{question:}
Other than the example in \autoref{example:4-dim-symplectic}, are there more $4$-dimensional counterexamples to Putman-Wieland obtainable via
\autoref{proposition:representation-theoretic-criterion}?
Are there infinitely many such? Can one classify them?
\end{question}
Moreover, generalizing from $4$-dimensional representations to higher
dimensional representations, we ask the following.
\begin{question}
\label{question:}
Can one obtain families of origami curves with isotrivial isogeny factor coming
from \autoref{proposition:representation-theoretic-criterion}(2) via representations of arbitrarily large dimension? For each such dimension,
are there infinitely many? Can one classify them?
\end{question}
In \autoref{example:2-dim-symplectic}, we showed there are families of
origami curves of arbitrarily large genus $g$, whose Jacobians have an isotrivial isogeny factor of
dimension more than $g/2$.
\begin{question}
\label{question:proportion-bound}
What is the supremum over all positive real numbers $c$ so that, for arbitrarily large $g$,
there is a non-isotrivial family of genus $g$ curves whose Jacobians have an
isotrivial isogeny factor of dimension at least $c g$? What about the analogous question for families of origami curves?
\end{question}
As mentioned above, by \autoref{example:2-dim-symplectic}, $c \geq 1/2$, and so
$1/2 \leq c \leq 1$.

Even deciding whether $c < 1$ seems quite difficult, which leads to our next
question.
M\"oller showed in
\cite[Theorem 5.1]{moller:shimura-and-teichmuller-curves}, 
(using also \cite[Corollary 3.3]{moller:shimura-and-teichmuller-curves},)
that for $g \geq 6$,
there are no families of genus $g$ curves whose Jacobian has an isotrivial isogeny
factor of dimension $g - 1$.
More recently, 
Aulicino and Norton showed moreover that no such curves exist when $g = 5$ \cite{aulicinoN:shimura}.
\begin{question}
\label{question:codimension-bound}
For any fixed positive integer $m$, are there families of curves with an isotrivial isogeny
factor of dimension $g - m$ for arbitrarily large $g$?
\end{question}
As noted above, we find that the answer is ``no'' for $m = 1$.
If the answer to \autoref{question:codimension-bound} is ``yes" for any $m$, then
$c = 1$ in \autoref{question:proportion-bound}.

Finally, we note that many of the examples of isotrivial isogeny factors we
found above were isotypic in the following sense: 
consider a family of curves $\mathscr C \to B$
with a finite group $H$ acting on $\mathscr C$.
For any $b \in B$, the resulting $H$ action on $H^1(\mathscr C_b,
\mathscr O_{\mathscr C_b})$ 
decomposes as a sum of $\rho$-isotypic components
$H^1(\mathscr C_b, \mathscr O_{\mathscr C_b})^\rho$ over $H$-irreps $\rho$.
We say the isotrivial isogeny factor of the Jacobian of $\mathscr C \to B$ is
{\em isotypic} if,
for any $b \in B$, the tangent space to the maximal isotrivial isogeny factor is
identified with a direct sum of $\rho$-isotypic components of $H^1(\mathscr C_b,
\mathscr O_{\mathscr C_b})$.
\begin{question}
\label{question:}
Can one produce non-isotrivial families of curves $\mathscr C \to B$ with an
$H$
action as above whose Jacobian has an isotypic isotrivial isogeny factor and so that the quotient $\mathscr C/H$ has genus at least $2$?
Can one produce such families where $\mathscr C/H$ has arbitrarily large genus?
Can one produce such families of curves with an isotypic isotrivial
isogeny factor so that the base $B$ has large dimension?
\end{question}
As a partial answer to the last question above,
in a forthcoming paper, we will show it is impossible to have isotypic
isotrivial isogeny factors whenever the quotient curve
$\mathscr C/H$ has genus $h$ and dominates $\mathscr M_h$ if $h \geq 3$.
On the other hand, it would be quite interesting to find such an example when $h
= 2$. If such a family exists, it would give a counterexample to Putman-Wieland
in genus $2$.

\section{Counterexamples for hyperelliptic curves}
\label{section:counterexample-hyperelliptic}
We now explain that the Putman-Wieland conjecture fails if we restrict our attention to the hyperelliptic mapping class group, following Markovi\'{c} \cite{markovic}. In particular, this implies that the Putman-Wieland conjecture fails in genus $2$.

\begin{definition}
	\label{definition:}
	Choose a hyperelliptic involution $\iota$ acting on $S_g$ and let
	the {\em hyperelliptic mapping class
	group} $\HMod_{g} := \Mod_{g}^{\iota} \subset \Mod_{g}$
	denote the
	centralizer of $\iota$ in $\Mod_{g}$.
	Let $\HMod_{g,n} \subset \Mod_{g,n}$ denote the preimage of
	$\HMod_g \subset \Mod_g$ under the surjection $\Mod_{g,n} \to \Mod_g$.
\end{definition}
\begin{remark}
	\label{remark:}
	Any two such hyperelliptic involutions $\iota \in \Mod_{g}$ are
	conjugate by \cite[Proposition 7.15]{farbM:a-primer}.
\end{remark}
%Also related is the fact that teichmuller space of the quotient
%curve is the same as teichmuller space of this moduli of hyperelliptic
%curves, see
%https://academic-oup-com.eres.qnl.qa/imrn/article/2020/23/9293/5149526
%3.2.2
%Picard Groups of Moduli Spaces of Curves with Symmetry
We next recall a foundational result on topology of the moduli
stack of hyperelliptic curves. 
For a reference, we recommend the survey article
\cite[Proposition 1]{gonzalezdiezH:moduli-of-riemann-surfaces-with-symmetry},
and the reader may also consult \cite[Theorem 2]{harvey:on-branch-loci}.

\begin{lemma}
\label{lemma:hyperelliptic}
	Let $g\geq 2, n \geq 0$ and let
	$\mathscr{H}_{g,n} \subset \mathscr{M}_{g,n}$ denote the closed substack
	parameterizing families of smooth proper hyperelliptic curves of genus $g$ with
	geometrically connected fibers and $n$ disjoint sections.
	Under
	the identification of $\Mod(\Sigma_{g,n})$ with $\pi_1(\mathscr
	M_{g,n})$, one may recover
	$\HMod_{g,n}$ as 
	the image of $\pi_1(\mathscr{H}_{g,n}) \to \pi_1(\mathscr
	M_{g,n})$.
	Moreover $\mathscr{H}_{g,n}$ is a $K(\pi_1(\mathscr H_{g,n}), 1)$.
\end{lemma}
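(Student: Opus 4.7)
The plan is to describe $\mathscr{H}_{g,n}$ as the quotient stack of a contractible Teichm\"uller-theoretic space by $\HMod_{g,n}$, so that both the $K(\pi,1)$ property and the identification of the image of $\pi_1$ fall out simultaneously. Let $\mathcal{T}_{g,n}$ be the Teichm\"uller space of marked complex structures on $\Sigma_{g,n}$; it is a contractible complex manifold and one has $\mathscr{M}_{g,n}\cong[\mathcal{T}_{g,n}/\Mod_{g,n}]$ as complex-analytic stacks. Fix the hyperelliptic involution $\iota$ used to define $\HMod_g$; it induces a holomorphic involution of $\mathcal{T}_g$, and I write $\mathcal{T}_g^\iota\subset\mathcal{T}_g$ for its fixed locus. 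Let $\mathcal{T}_{g,n}^\iota\subset\mathcal{T}_{g,n}$ denote the preimage of $\mathcal{T}_g^\iota$ under the forgetful map.

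The first step is to establish contractibility of $\mathcal{T}_{g,n}^\iota$. Using Royden's description of the fixed locus of a finite-order element on Teichm\"uller space, $\mathcal{T}_g^\iota$ is a complex submanifold biholomorphic to $\mathcal{T}_{0,2g+2}$ (the Teichm\"uller space of the quotient sphere with the $2g+2$ Weierstrass images marked), and is in particular contractible. The forgetful map $\mathcal{T}_{g,n}^\iota\to\mathcal{T}_g^\iota$ is then a smooth fiber bundle whose fibers are contractible---each fiber is the universal cover of a configuration-type space on a fixed hyperelliptic curve---so $\mathcal{T}_{g,n}^\iota$ is contractible as well.

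The second step is to identify $\mathscr{H}_{g,n}$ with $[\mathcal{T}_{g,n}^\iota/\HMod_{g,n}]$. A marked complex structure on $\Sigma_{g,n}$ lies in $\mathcal{T}_{g,n}^\iota$ iff the underlying complex structure on $\Sigma_g$ admits $\iota$ as a holomorphic automorphism, which (by the uniqueness of the hyperelliptic involution for $g\geq 2$) is equivalent to the underlying curve being hyperelliptic; hence the image of $\mathcal{T}_{g,n}^\iota$ in $\mathscr{M}_{g,n}$ is exactly $\mathscr{H}_{g,n}$. Conversely, an element $\phi\in\Mod_{g,n}$ preserves $\mathcal{T}_{g,n}^\iota$ iff its image in $\Mod_g$ preserves $\mathcal{T}_g^\iota$, which by the same uniqueness holds iff that image centralizes $\iota$ in $\Mod_g$; this is precisely the condition $\phi\in\HMod_{g,n}$. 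Contractibility of $\mathcal{T}_{g,n}^\iota$ then immediately gives both the $K(\pi,1)$ property for $\mathscr{H}_{g,n}$ and the identification of the map $\pi_1(\mathscr{H}_{g,n})\to\pi_1(\mathscr{M}_{g,n})$ with the inclusion $\HMod_{g,n}\hookrightarrow\Mod_{g,n}$.

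The main obstacle is verifying contractibility of $\mathcal{T}_{g,n}^\iota$, and more specifically pinning down its fiber-bundle structure over $\mathcal{T}_g^\iota$ in the presence of the $n$ marked sections (which need not be permuted by $\iota$). For $\mathcal{T}_g^\iota$ this is classical, and for the $n$-pointed version one must track how each marked section descends to either one or two points on $\Sigma_g/\iota$ to obtain the configuration-space description of the fibers. These points are treated in the references cited just before the lemma statement, which one may invoke directly once the setup above is in place.
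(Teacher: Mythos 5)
Your proposal is correct and coincides with the argument the paper has in mind: the paper does not prove this lemma itself but cites Gonz\'alez-D\'iez--Harvey and Harvey, who establish it in exactly this way, by realizing $\mathscr{H}_{g,n}$ as the quotient of the contractible fixed locus of the hyperelliptic involution in Teichm\"uller space by its stabilizer $\HMod_{g,n}$. The only point to make explicit is that uniqueness (hence centrality) of the hyperelliptic involution should be invoked pointwise---if $\phi X\in\mathcal{T}_{g,n}^{\iota}$ for a single $X\in\mathcal{T}_{g,n}^{\iota}$ then $\phi\in\HMod_{g,n}$---so that orbits and isotropy groups match and $[\mathcal{T}_{g,n}^{\iota}/\HMod_{g,n}]\to\mathscr{M}_{g,n}$ is an isomorphism onto the substack $\mathscr{H}_{g,n}$, not merely surjective onto it.
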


\begin{remark}
	\label{remark:}
	The hyperelliptic mapping class group is sometimes also called the
	symmetric mapping class group
	\cite[\S9.4]{farbM:a-primer}.
\end{remark}
\begin{definition}
	\label{definition:}
	In the setup of \autoref{definition:pw},
	let $\HPW_{g,n}$ be the statement that for every finite index
characteristic subgroup $K \subset \pi_1(\Sigma_{g,n}, v_0)$, 
there are no nonzero vectors $v \in V_h := H_1(\Sigma_{g'}, \mathbb C)$
with finite orbit under the action of $\HMod_{g,n+1}$.
\end{definition}

\begin{problem}[Hyperelliptic Putman-Wieland problem, cf. \protect{\cite[Problem
	3.4]{boggi:hyperelliptic-arxiv}}]
	\label{problem:hyperelliptic-putman-wieland}
		Suppose $g \geq 2, n \geq 0$. Does $\HPW_{g,n}$ hold?
\end{problem}

The following result is essentially \cite[Theorem 1.3]{markovic} where the
case $g = 2$ is proven. We observe here that an analogous proof works for hyperelliptic curves in all genera.
Recall we use
$\mathscr{H}_{g,n}$ for the moduli stack of
    smooth hyperelliptic curves of genus $g$ with $n$ marked points, as in
    \autoref{lemma:hyperelliptic}. 
\begin{proposition}
    \label{proposition:hyperelliptic-counterexample}
    Let $g\geq 2, n \geq 0$ be integers.
    There exists a finite \'etale map
    $\mathscr{H}\to \mathscr{H}_{g,n}$, so that if $\mathscr C$ is the
    corresponding relative curve over $\mathscr H$,
    there is a finite \'etale cover $\mathscr{X}$ of $\mathscr{C}$ of degree 36 such that:
    \begin{enumerate}
        \item the composite map $\mathscr{X}\to \mathscr{C}\to \mathscr{H}$ has geometrically connected fibers.
	\item The relative Jacobian $\pic^0_{\mathscr X/\mathscr H}$ has an isotrivial isogeny factor.
    \end{enumerate}

    For $x\in \mathscr{H}$ a geometric point,
    the finite index subgroup of the hyperelliptic mapping class group $\pi_1(\mathscr{H},
    x)$
    has a vector with finite orbit under its action on $H^1(\mathscr X_x,
    \mathbb Q)$ and so the answer to the hyperelliptic Putman-Wieland problem is
    negative
    for every $g \geq 2,n \geq 0$.
    In particular, $\PW_{2,n}$ is false for all $n \geq 0$.
\end{proposition}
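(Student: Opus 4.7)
The plan is to extend Markovi\'c's genus-$2$ construction from \cite{markovic} to arbitrary hyperelliptic genus by pulling back, along the universal hyperelliptic double cover, a specific branched cover of $\mathbb{P}^1$ produced via the Bogomolov-Tschinkel method. The key observation is that a hyperelliptic curve of any genus $g\geq 2$ has a degree-$2$ map to $\mathbb P^1$ branched at $2g+2$ points, and the Bogomolov-Tschinkel data used by Markovi\'c involves only six of these branch points; the remaining $2g-4$ play no role.

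First I would replace $\mathscr{H}_{g,n}$ by a finite \'etale cover $\mathscr{H}\to \mathscr{H}_{g,n}$ on which the $2g+2$ branch sections of the universal hyperelliptic map $\pi:\mathscr{C}\to \mathbb{P}^1_{\mathscr{H}}$ become ordered; call the first six such labelled sections $\sigma_1,\ldots,\sigma_6$. Next, following Bogomolov-Tschinkel (as in Markovi\'c's exposition), I would produce a finite group $G$ of order $36$ together with a surjection $\varphi:\pi_1(\mathbb{P}^1-\{\sigma_1,\ldots,\sigma_6\})\twoheadrightarrow G$ sending each loop around $\sigma_i$ to an element of order exactly $2$, and such that the corresponding cover $\mathscr{Z}\to \mathbb P^1_{\mathscr{H}}$ admits a non-constant map to a fixed positive-genus curve $E$. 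Setting $\mathscr{X}:=\mathscr{Z}\times_{\mathbb{P}^1_{\mathscr{H}}}\mathscr{C}$, cancellation of the order-$2$ local monodromy of $\mathscr{Z}\to \mathbb P^1$ against the ramification index $2$ of $\pi$ at each $\sigma_i$, together with unramifiedness of $\mathscr{Z}\to \mathbb P^1$ at $\sigma_7,\ldots,\sigma_{2g+2}$ and at the marked sections, makes $\mathscr{X}\to \mathscr{C}$ \'etale of degree $|G|=36$, proving (1). Geometric connectivity of $\mathscr{X}\to \mathscr{H}$ reduces to a combinatorial check on $\varphi$, arranged in its construction; and the composite $\mathscr{X}\to \mathscr{Z}\to E$ exhibits $\jac(E)$ as an isotrivial isogeny factor of $\pic^0_{\mathscr{X}/\mathscr{H}}$, proving (2).

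To conclude, for any geometric point $x\in \mathscr{H}$, take a non-zero $v\in H^1(E,\mathbb{Q})$ and regard it as an element of $H^1(\mathscr{X}_x,\mathbb{Q})$ via pullback along $\mathscr{X}_x\to E$. Since $E$ is a constant family over $\mathscr{H}$, the class $v$ is fixed by $\pi_1(\mathscr{H},x)$, a finite-index subgroup of the hyperelliptic mapping class group, so $v$ has finite orbit, disproving $\HPW_{g,n}$. For $g=2$, every genus-$2$ curve is hyperelliptic and hence $\HMod_{2,n+1}=\Mod_{2,n+1}$, so $\PW_{2,n}$ is false for all $n\geq 0$.

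The hard part is the Bogomolov-Tschinkel construction itself: producing the group $G$ of order $36$ and the surjection $\varphi$ so that the corresponding cover factors through a positive-genus curve $E$ independent of the point of $\mathscr{H}$. This is carried out by Markovi\'c in the $g=2$ case, and the only content of the extension to arbitrary $g$ is the observation that, since $\varphi$ is trivial around $\sigma_7,\ldots,\sigma_{2g+2}$, these extra branch points do not interfere with the \'etaleness of the pullback along $\pi$, so Markovi\'c's construction applies verbatim.
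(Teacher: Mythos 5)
Your overall strategy matches the paper's: produce, in families over a finite \'etale cover of $\mathscr{H}_{g,n}$, a degree-$36$ \'etale cover of the universal hyperelliptic curve whose fibers dominate a fixed positive-genus curve, deduce an isotrivial isogeny factor, pull back an invariant class, and use $\mathscr{H}_2=\mathscr{M}_2$ for the genus-$2$ conclusion. The concluding steps (invariance of the pulled-back class, which is the easy direction of \autoref{lemma:trivial-equivalence}, the identification of the relevant monodromy with a finite-index subgroup of $\HMod_{g,n+1}$ as in \autoref{lemma:hyperelliptic}, and $\HMod_{2,n+1}=\Mod_{2,n+1}$) are fine. The gap is in the crucial input, which you both black-box and misdescribe. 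You assert that Markovi\'c/Bogomolov--Tschinkel produce a single Galois cover $\mathscr{Z}\to\mathbb{P}^1$ with group $G$ of order $36$, branched at exactly six of the Weierstrass points with local monodromy of order exactly $2$, whose fibers dominate a fixed curve $E$, so that the desired cover is just the (normalized) pullback along the hyperelliptic map and ``Markovi\'c's construction applies verbatim.'' That is not what the cited construction provides: the degree-$36$ \'etale cover of \autoref{theorem:BT-construction} is a tower over the hyperelliptic curve itself, a composite of covers of degrees $2$, $9$, $2$, whose first step is the normalized fiber product with an auxiliary elliptic curve $E_\lambda$ depending on a branch point, and whose later steps are covers of that intermediate curve --- it is neither constructed as, nor obviously isomorphic to, the pullback of an order-$36$ Galois cover of $\mathbb{P}^1$ branched at six points. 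The existence of such a $\mathbb{P}^1$-cover whose fibers, as the six points vary over all configurations, all dominate one fixed curve is a strong statement (it is essentially a new genus-$0$ isotriviality result of the type discussed in \autoref{section:examples}), and you neither prove it nor can you correctly attribute it to \cite{markovic} or \cite{BT:curve-correspondences}. Since this existence claim is the entire content of the proposition, the argument as written has a genuine gap.

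Two smaller points: the plain fiber product $\mathscr{Z}\times_{\mathbb{P}^1_{\mathscr H}}\mathscr{C}$ is nodal over the common branch points, so you must pass to its normalization before claiming \'etaleness over $\mathscr{C}$; and geometric connectedness of the pullback is a genuine condition on the restriction of $\varphi$ to the index-two subgroup coming from $\mathscr{C}$, not something that follows from surjectivity of $\varphi$ alone. The paper avoids all of this by citing \cite[Proposition 3.8, Remark 3.10]{BT:curve-correspondences} directly for hyperelliptic curves of arbitrary genus and only adding the observation that the construction works in families; your observation that the extra $2g-4$ branch points do not interfere is in the right spirit, but to make your write-up correct you would either have to redo the Bogomolov--Tschinkel tower in families (as the paper does by citation) or actually establish the order-$36$ cover of $\mathbb{P}^1$ you posit.
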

We will prove \autoref{proposition:hyperelliptic-counterexample} below in
\autoref{subsection:proof-hyperelliptic}.
We will use the following result of Bogomolov-Tschinkel:
\begin{theorem}[{\cite[Proposition 3.8, Remark 3.10]{BT:curve-correspondences}}]\label{theorem:BT-construction}
    Let $g\geq 2$ be an integer. 
    Let $C_0$ be the genus $2$ curve defined by $y^6=x(x-1)$ and $E_0$ be the
    genus $1$ curve defined by $y^3=x(x-1)$.
    Let $Z \in \{C_0, E_0\}$.
    There exists a finite \'etale cover
    $\mathscr{H}_Z$  of $\mathscr{H}_g$, and a finite
    \'etale cover
    $\mathscr{X}_Z$ of $\mathscr{C}_Z := \mathscr C_g \times_{\mathscr H_g} {\mathscr{H}_Z}$ such that the composite map
    $\pi'_Z: \mathscr{X}_Z\to \mathscr{C}_Z\to \mathscr{H}_Z$ has geometrically
    connected fibers, and such that every fiber of $\pi'_Z$ admits a non-constant
    map to the smooth projective curve $Z$.
	If $Z = C_0$, the cover $\mathscr X_Z \to \mathscr C_Z$ has degree $648$ 
	while if $Z = E_0$
	this map has degree $36$.
	The non-constant map $\mathscr X_Z \to Z \times \mathscr H_Z$ has degree
	$4$.
\end{theorem}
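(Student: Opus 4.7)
The plan is to apply \autoref{theorem:BT-construction} with $Z = E_0$, producing the degree $36$ cover, and then extend from $\mathscr{H}_g$ to $\mathscr{H}_{g,n}$ by base change. Bogomolov-Tschinkel give a finite étale cover $\mathscr{H}_{E_0} \to \mathscr{H}_g$ and a degree $36$ étale cover $\mathscr{X}_{E_0} \to \mathscr{C}_{E_0}$ whose fibers over $\mathscr{H}_{E_0}$ are geometrically connected and admit a nonconstant degree $4$ morphism $u_{E_0}: \mathscr{X}_{E_0} \to E_0 \times \mathscr{H}_{E_0}$. I set $\mathscr{H} := \mathscr{H}_{E_0} \times_{\mathscr{H}_g} \mathscr{H}_{g,n}$, let $\mathscr{C}$ be the relative curve over $\mathscr{H}$ obtained by pullback of the universal curve on $\mathscr{H}_{g,n}$, and define $\mathscr{X} := \mathscr{X}_{E_0} \times_{\mathscr{C}_{E_0}} \mathscr{C}$. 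Since $\mathscr{H}_{E_0} \to \mathscr{H}_g$ is finite étale, its base change $\mathscr{H} \to \mathscr{H}_{g,n}$ is as well, and $\mathscr{X} \to \mathscr{C}$ inherits finite étaleness of degree $36$.

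I then verify (1) and (2). For (1), the fibers of the composite $\pi': \mathscr{X} \to \mathscr{H}$ depend on $x \in \mathscr{H}$ only through its image in $\mathscr{H}_{E_0}$, and they agree with the corresponding fibers of $\mathscr{X}_{E_0} \to \mathscr{H}_{E_0}$, which are geometrically connected by \autoref{theorem:BT-construction}. For (2), pulling back $u_{E_0}$ along $\mathscr{X} \to \mathscr{X}_{E_0}$ produces a degree $4$ nonconstant morphism $u: \mathscr{X} \to E_0 \times \mathscr{H}$ over $\mathscr{H}$. This induces a homomorphism of abelian schemes $u^*: E_0 \times \mathscr{H} \cong \pic^0_{(E_0 \times \mathscr{H})/\mathscr{H}} \to \pic^0_{\mathscr{X}/\mathscr{H}}$; fiberwise, composition with the norm map is multiplication by $\deg u_x = 4$, so each $u^*_x$ is an isogeny onto its image. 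Hence $u^*(E_0 \times \mathscr{H})$ is an abelian subscheme of $\pic^0_{\mathscr{X}/\mathscr{H}}$ isogenous to the constant $E_0 \times \mathscr{H}$, i.e., an isotrivial isogeny factor.

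For the final claims, the isotrivial isogeny factor induces an injection of local systems from the constant local system with fiber $H^1(E_0, \mathbb{Q})$ into $R^1\pi'_*\mathbb{Q}$; its stalk at $x$ is a two-dimensional $\pi_1(\mathscr{H}, x)$-invariant subspace of $H^1(\mathscr{X}_x, \mathbb{Q})$ on which the action is trivial, so any nonzero vector in it has trivial orbit. By \autoref{lemma:hyperelliptic}, the image of $\pi_1(\mathscr{H}, x)$ in $\pi_1(\mathscr{H}_{g,n}, x)$ has finite index in the hyperelliptic mapping class group $\HMod_{g,n}$, furnishing a negative answer to the hyperelliptic Putman-Wieland problem. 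Since every smooth genus $2$ curve is hyperelliptic, $\HMod_{2,n} = \Mod_{2,n}$, so the same cover gives a counterexample to $\PW_{2,n}$ for every $n \geq 0$. The real difficulty lies in the input construction \autoref{theorem:BT-construction}, which supplies the degree $36$ étale cover of the universal hyperelliptic curve and the four-to-one map to $E_0$; the extension to marked points via fiber product is automatic.
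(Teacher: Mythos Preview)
You have proven the wrong statement. The statement you were asked to prove is \autoref{theorem:BT-construction} itself---the Bogomolov--Tschinkel construction---but your proposal \emph{assumes} \autoref{theorem:BT-construction} as a black box and uses it to deduce \autoref{proposition:hyperelliptic-counterexample}. You even write explicitly that ``the real difficulty lies in the input construction \autoref{theorem:BT-construction},'' which is precisely the thing you were meant to establish.

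The paper's proof of \autoref{theorem:BT-construction} is not a formal argument but an explanation of how to extract the claim from \cite{BT:curve-correspondences}: Bogomolov--Tschinkel's construction is stated for a single hyperelliptic curve $C$, but works in families; the cover $X\to C$ mapping to $C_0$ is a composite of five maps of degrees $2,9,2,9,2$, giving total degree $648$, and the intermediate composite of the first three (degrees $2,9,2$) already produces a degree-$36$ cover with a degree-$4$ map to $E_0$. None of this appears in your proposal. What you wrote is, in fact, a correct and slightly more detailed version of the paper's proof of \autoref{proposition:hyperelliptic-counterexample} (where the paper simply cites the equivalence in \autoref{lemma:trivial-equivalence}, you spell out the pullback on Picard schemes), but it is not a proof of the statement in question.
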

\begin{proof}
Strictly speaking, in 
\cite[Proposition 3.8, Remark 3.10]{BT:curve-correspondences}, it is only shown
that a hyperelliptic curve $C$ admits a finite \'etale cover $X$ with a non-constant map to $C_0$; however, their construction works in families, hence the statement. 
From their construction, the map is a composite of $5$ maps of respective
degrees $2, 9, 2, 9$,
and $2$, so has degree $2\cdot 9 \cdot 2\cdot 9 \cdot 2 = 648$.
(This was mistakenly claimed to be $72$ in \cite[Remark 3.10]{BT:curve-correspondences}
but updated in a later version  \cite[Remark 3.10]{BT:updated-curve-correspondences}.)
An intermediate step in the proof of
\cite[Proposition 3.8]{BT:curve-correspondences}
shows that the hyperelliptic curve $C$ admits a degree $36$ finite \'etale cover
$X$
with a degree $4$ non-constant map to $E_0$. 
This cover $X \to C$ is a composite of the first three maps of the
above-mentioned $5$ maps, which
have degrees $2, 9$, and $2$. 
Hence, this cover has degree $36$.
\end{proof}
\begin{remark} 
Note that there is a small typo in \cite[Example 3.7]{BT:curve-correspondences},
which claims that every hyperelliptic curve admits a non-constant map to an
elliptic curve satisfying some extra conditions; this claim is evidently false.
However, given a hyperelliptic curve $C$, one may construct a finite \'etale
cover $X$ of $C$ mapping to an elliptic curve and satisfying the desired
conditions as follows; such an $X$ suffices for the rest of their argument to go
through. Let $f: C\to \mathbb{P}^1$ be the hyperelliptic double cover, which we
may without loss of generality assume is branched over $0,1,\infty$. Let
$\lambda$ be another branch point of $f$, and let $E_\lambda$ be the elliptic
curve defined by $$E_\lambda: y^2=x(x-1)(x-\lambda).$$ Then any component $X$ of the normalized fiber product $\widetilde{E\times_{\mathbb{P}^1} C}$ satisfies the desired properties.
\end{remark}

In order to prove \autoref{proposition:hyperelliptic-counterexample}, we will
also use the following equivalences. This is a fairly standard Hodge-theoretic
argument, the key input being the theorem of the fixed part.
\begin{lemma}
       \label{lemma:trivial-equivalence}
       Suppose $\mathscr{M}'$ is a scheme and we have a relative smooth proper curve $\pi': \mathscr C' \to \mathscr{M}'$ with geometrically connected fibers.
       The following properties are equivalent.
\begin{enumerate}
       \item[(1)] For $x\in \mathscr{M}'$, the vector space $H_1(\mathscr{C}'_x,
               \mathbb{C})$ contains no non-zero vector with finite orbit under the action
               of $\pi_1(\mathscr{M}', x)$.
       \item[(2)] The local system $R^1 \pi'_* \mathbb C$ contains no
               sub-local system with finite monodromy.
       \item[(3)] The relative Jacobian $\pic^0_{\mathscr C'/\mathscr M'}$ has no isotrivial isogeny factor.
      \end{enumerate}
The following properties are also equivalent.
\begin{enumerate}
       \item[(1')] For $x\in \mathscr{M}'$, the vector space $H_1(\mathscr{C}'_x,
               \mathbb{C})$ contains no non-zero fixed vector under the action
               of $\pi_1(\mathscr{M}', x)$.
       \item[(2')] The local system $R^1 \pi'_* \mathbb C$ contains no
               trivial sub-local system.
       \item[(3')] The relative Jacobian $\pic^0_{\mathscr C'/\mathscr M'}$ has no constant isogeny factor.
\end{enumerate}
\end{lemma}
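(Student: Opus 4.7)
The strategy is to reduce all three conditions to statements about the polarizable variation of Hodge structure $R^1\pi'_*\mathbb Q$, which controls the relative Jacobian $\pic^0_{\mathscr C'/\mathscr M'}$. I will establish the two chains of equivalences in parallel, as the arguments differ only in replacing ``finite monodromy" by ``trivial monodromy."

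For (2) $\Leftrightarrow$ (3), recall that the relative Jacobian $\pic^0_{\mathscr C'/\mathscr M'}$ is the abelian scheme associated to the polarizable weight 1 VHS $R^1\pi'_*\mathbb Z$ under the equivalence between such VHS and abelian schemes (up to isogeny when working with $\mathbb Q$-coefficients). Under this correspondence, sub-abelian schemes up to isogeny correspond to sub-VHS, while constant (resp.\ isotrivial) abelian subschemes correspond to sub-local systems with trivial (resp.\ finite) monodromy. Moreover, any sub-local system of $R^1\pi'_*\mathbb Q$ with finite monodromy actually underlies a sub-VHS: passing to a finite \'etale cover $\mathscr M'' \to \mathscr M'$ that trivializes the monodromy, the theorem of the fixed part identifies the monodromy invariants on a fiber with a sub-Hodge structure of the total cohomology, and this sub-VHS descends back to $\mathscr M'$. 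The analogous argument with ``finite" replaced by ``trivial" handles (2') $\Leftrightarrow$ (3').

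For (1) $\Leftrightarrow$ (2), the implication (2) $\Rightarrow$ (1) is immediate, since any vector in a fiber of a sub-local system of $R_1\pi'_*\mathbb C \simeq (R^1\pi'_*\mathbb C)^\vee$ with finite monodromy has finite $\pi_1(\mathscr M', x)$-orbit. For the converse, suppose $v \in H_1(\mathscr C'_x, \mathbb C)$ has finite orbit. The $\mathbb C$-linear span of this orbit is a finite-dimensional $\pi_1(\mathscr M', x)$-invariant subspace on which the monodromy acts through a finite quotient, hence corresponds to a sub-local system of $(R^1\pi'_*\mathbb C)^\vee$ with finite monodromy and dually to a quotient local system of $R^1\pi'_*\mathbb C$ with finite monodromy. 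By semisimplicity of polarizable $\mathbb Q$-VHS, this quotient is a direct summand and therefore also appears as a sub-local system, yielding (2). The analogous reasoning, with ``finite quotient" replaced by ``trivial action," gives (1') $\Leftrightarrow$ (2').

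The main subtlety lies in invoking the theorem of the fixed part to upgrade a sub-local system with finite monodromy to a sub-VHS; this is the step that essentially uses smoothness and properness of $\pi'$. Everything else reduces to formal inputs: Poincar\'e duality identifying $R_1\pi'_*\mathbb C$ with the dual of $R^1\pi'_*\mathbb C$, semisimplicity of polarizable VHS (due to Deligne), and the equivalence between polarizable weight 1 VHS and abelian schemes up to isogeny.
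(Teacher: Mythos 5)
Your proof is correct and takes essentially the same route as the paper's: both reduce ``finite monodromy'' to ``trivial monodromy'' by passing to a finite \'etale cover, use the theorem of the fixed part to promote a trivial-monodromy sub-local system to a constant sub-VHS (hence a constant/isotrivial isogeny factor), and pass between $H_1$ and $H^1$ by duality. The only cosmetic differences are that you package the last step via the equivalence between polarizable weight-one $\mathbb{Z}$-VHS and abelian schemes together with Deligne semisimplicity, whereas the paper constructs the constant isogeny factor explicitly as $F^1(\mathbb{W}\otimes_{\mathbb{C}}\mathscr{O}_{\mathscr{M}'})^\vee/\mathbb{W}_{\mathbb{Z}}^\vee$ and uses Poincar\'e duality directly, proving the primed equivalences first and deducing the unprimed ones.
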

\begin{proof}
       The equivalence of (1), (2), and (3) follow from that of (1'), (2'),
       and
       (3')
       by passing to a suitable connected finite \'etale cover of $\mathscr{M}'$ upon which the
       vector with finite orbit in (1) becomes a fixed vector, the sub-local
       system with finite monodromy in (2) becomes trivial, and the
       isotrivial factor in (3) becomes trivial.

       It remains to verify the equivalence of (1'), (2'), and (3').
       Under the equivalence between $\mathbb C$ representations of $\pi_1(\mathscr{M}',x)$ and local
       systems of $\mathbb C$ vector spaces on $\mathscr{M}$, a non-zero fixed vector in
       $H_1(\mathscr{C}'_x, \mathbb{C})$ corresponds to a nontrivial sub-local
       system, which gives the equivalence between (1') and (2').
       Note here we are using Poincar\'e duality to identify $H_1(\mathscr{C}'_x, \mathbb{C})$
       and $H^1(\mathscr{C}'_x, \mathbb{C})$.

	We next verify $(3') \implies (2')$. 
	To this end, let
       $\phi: \pic^0_{\mathscr C'/\mathscr M'} \to \mathscr{M}'$ denote the structure map.
       A trivial isogeny factor
       $\iota: A \to \pic^0_{\mathscr C'/\mathscr M'}$, gives
       a trivial sub-local system 
	$R^1 (\phi \circ \iota)_* \mathbb C \subset R^1 \pi'_* \mathbb C$.

       We conclude by demonstrating $(2') \implies (3')$.
       Suppose we begin with a trivial sub-local system of 
       $\mathbb W \subset R^1 \phi_* \mathbb C$, 
       the theorem of the fixed part \cite[Corollaire 4.1.2]{deligne:hodge-ii}
       implies that $\mathbb W$ is also a variation of Hodge structure.
       Let $\mathbb W_{\mathbb Z} \subset \mathbb W$ denote the trivial $\mathbb Z$ local system on $\mathscr M$
       for which $\mathbb W_{\mathbb Z} \otimes_{\mathbb Z} \mathbb C \simeq
       \mathbb W$.
%       and take $F^1 \mathbb W_{\mathbb Z} := F^1 (\mathbb W \otimes_{\mathbb C} \mathscr O_{\mathscr M'}) \cap \mathbb W_{\mathbb Z}$.
       We can then recover the trivial isogeny factor as the quotient
       $F^1(\mathbb W \otimes_{\mathbb C} \mathscr O_{\mathscr M'})^\vee / \mathbb W_{\mathbb Z}^\vee$.
       (On fibers, this corresponds to the fact that an abelian variety $A$ can
       be written as the quotient $H^0(A, \Omega_A)^\vee/ H^1(A, \mathbb Z)$.)
\end{proof}

\subsection{}
\label{subsection:proof-hyperelliptic}
\begin{proof}[Proof of \autoref{proposition:hyperelliptic-counterexample}]
Let $\mathscr{H} := \mathscr H_{E_0}, \mathscr{C} := \mathscr C_{E_0}, \mathscr{X} := \mathscr X_{E_0}$ be as
defined in \autoref{theorem:BT-construction}.
As shown there, the cover $\mathscr X \to \mathscr C$ has degree $36$.
The Jacobian of $E_0$ is an isogeny factor of $\pic^0_{\mathscr X/\mathscr H}$.

Having produced a relative Jacobian with a constant isogeny factor, the
equivalence of \autoref{lemma:trivial-equivalence}(1) and 
\autoref{lemma:trivial-equivalence}(3)
together with the identification of $\pi_1(\mathscr H_{g,n})$ with $\HMod_{g,n}$
from \autoref{lemma:hyperelliptic},
implies the answer to \autoref{problem:hyperelliptic-putman-wieland}
is negative for every $g \geq 2, n \geq 0$.

Finally,  $\PW_{2,n}$ fails because the natural map
$\mathscr{H}_2 \to \mathscr{M}_2$ is an isomorphism, and hence in genus $2$ the
hyperelliptic mapping class group is identified with the entire mapping class group
by \autoref{lemma:hyperelliptic}.
\end{proof}
\begin{remark}
\autoref{proposition:hyperelliptic-counterexample} was originally claimed in
\cite{boggi:hyperellptic}; unfortunately the proof there was incorrect (see
\cite[Remark 3.17]{boggi:hyperelliptic-arxiv}). We would like to thank Marco
Boggi for his extremely helpful email correspondence regarding this issue, which
was useful to our understanding of the Putman-Wieland conjecture.
The reader may also consult \cite[Corollary 6.11]{boggi:notes-on-hyperelliptic}.
\end{remark}
\begin{remark}\label{rmk:sharpness}
In some sense \autoref{proposition:hyperelliptic-counterexample} shows that our proof of \autoref{corollary:asymptotic-putman-wieland} and \autoref{thm:high-codim-version} is sharp.	Indeed, \autoref{theorem:BT-construction} shows that for each $g$ there exists a diagram $$\xymatrix{
\mathscr{X} \ar[r]^f \ar[rd]_{\pi'} & \mathscr{C} \ar[d]^\pi \\
& \mathscr{H}
}$$
with $\pi$ a relative curve of genus $g$, so that the induced map $\mathscr{H}\to \mathscr{M}_g$ dominates the hyperelliptic locus, and moreover such that $f$ has degree bounded independently of $g$, yielding a counterexample to  $\HPW_{g,0}$. The hyperelliptic locus has codimension $g-2$ in $\mathscr{M}_g$, so this does not contradict \autoref{thm:high-codim-version}; but that theorem shows that such an example is impossible with families of curves whose codimension $\delta_g$ in $\mathscr{M}_g$ grows such that $g-\delta_g\to \infty$ with $g$.
\end{remark}

\bibliographystyle{alpha}
\bibliography{bibliography-mcg-hodge-theory}

\end{document}